\newcommand{\dxk}{\Delta{x}^k}
\newcommand{\dyk}{\Delta{\lambda}^k}
\newcommand{\dsk}{\Delta{z}^k}
\newcommand{\alphapk}{\alpha_P^k}
\newcommand{\alphadk}{\alpha_D^k}
\newcommand{\xysk}[1]{x^{#1}, {\lambda}^{#1}, z^{#1}}
\newcommand{\R}{\mathbb{R}}
\newcommand{\hopdm}{\texttt{HOPDM}}
\newcommand{\qnipm}{\texttt{qnHOPDM}}
\newcommand{\qnipmmc}{\texttt{qnHOPDM}-mc}
\newcommand{\qnv}{\bar x}
\newcommand{\notfeas}{{}$^\mathrm{\,a}$}
\newcommand{\notopti}{{\!}$^\mathrm{\,b}$}
\newcommand{\blanksp}{{}$^\mathrm{\,\hspace{4pt}}$}
\DeclareMathOperator{\diag}{diag}
\newtheorem{criterion}{Criterion}
\newtheorem{lemma}{Lemma}
\title{Quasi-Newton approaches to Interior Point Methods for quadratic
  problems}
\author{%
  J. Gondzio\footnote{School of Mathematics, University of Edinburgh,
    Edinburgh, EH9 3FD, Scotland, United Kingdom. Email:
    \url{J.Gondzio@ed.ac.uk}} \and%
  F. N. C. Sobral\footnote{Corresponding author. Department of
    Mathematics, State University of Maringá, Avenida Colombo, 5790,
    Paraná, Brazil, 87020-900. Phone: +55 44 30116211. E-mail:
    \url{fncsobral@uem.br}}%
}
\date{ \small \bf Technical Report ERGO 18-015, School of Mathematics,
  June 25, 2018}
\begin{document}

\maketitle

\begin{abstract}
  Interior Point Methods (IPM) rely on the Newton method for solving
  systems of nonlinear equations. Solving the linear systems which
  arise from this approach is the most computationally expensive task
  of an interior point iteration. If, due to problem's inner
  structure, there are special techniques for efficiently solving
  linear systems, IPMs enjoy fast convergence and are able to solve
  large scale optimization problems. It is tempting to try to replace
  the Newton method by quasi-Newton methods. Quasi-Newton approaches
  to IPMs either are built to approximate the Lagrangian function for
  nonlinear programming problems or provide an inexpensive
  preconditioner. In this work we study the impact of using
  quasi-Newton methods applied directly to the nonlinear system of
  equations for general quadratic programming problems. The cost of
  each iteration can be compared to the cost of computing correctors
  in a usual interior point iteration. Numerical experiments show that
  the new approach is able to reduce the overall number of matrix
  factorizations and is suitable for a matrix-free implementation.\\

  \noindent \textbf{Keywords}: Broyden Method, Quasi-Newton, Interior
  Point Methods, Matrix-free, Quadratic Programming Problems
\end{abstract}

\section{Introduction}
\label{intro}

Let us consider the following general quadratic programming problem
\begin{equation}
  \label{def:qp}
  \begin{array}{ll}
    \min & \frac{1}{2} x^T Q x + c^T x \\
    \mbox{s. t.} & A x = b \\
    & x \ge 0,
  \end{array}
\end{equation}
where $x, c \in \R^n$, $b \in \R^m$, $Q \in \R^{n \times n}$ and
$A \in \R^{m \times n}$. We will suppose that the rows of $A$ are
linearly independent. Define function $F: \R^{2n + m} \to \R^{2n + m}$
by
\begin{equation}
  \label{def:f}
  F(x, \lambda, z) =
  \begin{bmatrix}
    - Q x + A^T \lambda + z - c \\
    A x - b \\
    X Z e
  \end{bmatrix},
\end{equation}
where $X, Z \in \R^{n \times n}$ are diagonal matrices defined by
$X = \diag(x)$ and $Z = \diag(z)$, respectively, and $e$ is the vector
of ones of appropriate size. First order necessary conditions
for~\eqref{def:qp} state that, if $x^* \ge 0$ is a minimizer, then
there exist $z^* \in \R^n$, $z^* \ge 0$, and $\lambda^* \in \R^m$ such
that $F(\xysk{*}) = 0$.

Primal-Dual IPMs try to solve~\eqref{def:qp}
by solving a sequence of relaxed constrained nonlinear equations in
the form of
\begin{equation}
  \label{ipnls}
  F(x, y, s) =
  \begin{bmatrix}
    0 \\ 0 \\ \mu e
  \end{bmatrix},
  \quad x, s > 0,
\end{equation}
where $\mu \in \R$ is called the barrier parameter, which is
associated with the logarithmic barrier applied to the inequalities
$x \ge 0$ used to derive the method~\cite{Gondzio2012a,
  Wright1997}. As $\mu \to 0$ more importance is given to optimality
over feasibility. Systems of type~\eqref{ipnls} are not easy to
solve. When $\mu = 0$, they can be solved by general algorithms for
bounded nonlinear systems~\cite{Friedlander1997a, Kozakevich1996}. In
this case, a suitable merit function, usually $\|F(x)\|$, has to be
used to select the step-sizes. IPMs try to stay near the solution
of~\eqref{ipnls}, called the central path, and reduce $\mu$ at each
iteration. Instead of solving~\eqref{ipnls} exactly, one step of the
Newton method is applied. Thus, given an iterate $(\xysk{k})$, in the
interior of the bound constraints, i.e. $x^k, z^k > 0$, the next
point is given by
\begin{equation}
  \label{ipmss}
  (\xysk{k + 1}) = (\xysk{k}) + (\alpha_P \dxk, \alpha_D \dyk,
  \alpha_D \dsk),
\end{equation}
where $(\dxk, \dyk, \dsk)$ is computed by solving some Newton-like
systems
\begin{equation}
  \label{ipmit}
  J(\xysk{k})
  \begin{bmatrix}
    \dxk \\ \dyk \\ \dsk
  \end{bmatrix}
  = v,
\end{equation}
where $v \in \R^{2n + m}$ and
$J : \R^{2n + m} \to \R^{(2n + m) \times (2n + m)}$ is the Jacobian of
$F$, defined by
\begin{equation}
  \label{def:j}
  J(x, \lambda, z) = 
  \begin{bmatrix}
    -Q & A^T & I \\
    A  &   0 & 0 \\
    Z  &   0 & X
  \end{bmatrix}.
\end{equation}
Standard predictor-corrector algorithms solve~\eqref{ipmit} twice:
first the affine scaling predictor is computed for $v = - F(\xysk{k})$
and then the corrector step is computed using
$v = \begin{bmatrix} 0 & 0 & \sigma_k \mu_k e
\end{bmatrix}^T$,
with $\sigma_k \in (0, 1)$, $\mu_k = {x^k}^T z^k / n$.
Additional correctors can be computed in one iteration to further
accelerate convergence, such as second order
correctors~\cite{Mehrotra1992} or multiple centrality
correctors~\cite{Gondzio1996}. Scalars $\alpha_P$ and $\alpha_D$ are
selected such that $x^{k + 1} > 0$ and $s^{k + 1} > 0$, respectively.

% where $(\dxk, \dyk, \dsk)$ is the solution of the Newton system
% \begin{equation}
%   \label{ipmit}
%   J(\xysk{k})
%   \begin{bmatrix}
%     \dxk \\ \dyk \\ \dsk
%   \end{bmatrix}
%   =
%   \begin{bmatrix}
%     0 \\ 0 \\ \sigma_k \mu_k e
%   \end{bmatrix}
%   - F(\xysk{k})
% \end{equation}
% with $\sigma_k \in (0, 1)$, $\mu_k = {x^k}^T s^k / n$, and
% $J : \R^{2n + m} \to \R^{(2n + m) \times (2n + m)}$ is the Jacobian of
% $F$, defined by
% \begin{equation}
%   \label{def:j}
%   J(x, y, s) = 
%   \begin{bmatrix}
%     -Q & A^T & I \\
%     A &   0 & 0 \\
%     S &   0 & X
%   \end{bmatrix}.
% \end{equation}
% The scalars $\alpha_P$ and $\alpha_D$ are selected such that
% $x^{k + 1} > 0$ and $s^{k + 1} > 0$, respectively.

% Matrix $J(x, y, s)$ is known as the \emph{unreduced matrix}. Its huge
% dimension ($2n + m \times 2n + m$) and unsymmetry have frequently
% resulted in low importance in literature. Most of attention is
% directed to the \emph{augmented system} ($n + m \times n + m$
% symmetric indefinite coefficient matrix) or \emph{normal equations}
% ($m \times m$ symmetric positive definite coefficient matrix), for
% which several efficient preconditioners and algorithms exist and are
% well understood~\cite{Gondzio2012a}.

The most expensive task during an interior point (IP) iteration is to
solve~\eqref{ipmit}.
% the linear system generated by the Newton method, which is a
% linearization of the nonlinear system associated with the relaxed
% first order necessary conditions of the problem.
The coefficient matrix $J(x, \lambda, z)$ is known as unreduced matrix
and has dimension $(2n + m) \times (2n + m)$, but its nice structure
allows efficient solution techniques to be used. The most common
approaches for solving the linear system in IPMs are to work with
augmented system or normal equations. If we eliminate $\Delta s$
in~\eqref{ipmit}, we have the augmented system for which we can solve
directly using matrix factorizations or compute adequate
preconditioners and solve iteratively by Krylov subspace methods. If
matrix $Q$ is easily invertible, or $Q = 0$ (linear programming
problems), it is possible to further eliminate $\Delta x$ and solve
the normal equations by Cholesky factorization or by Conjugate
Gradients, depending on the size of the problem. For both approaches
it is known that computing good preconditioners or computing the
factorization can be most expensive part of the
process. Therefore~\eqref{ipmit} can be solved several times for the
same $J(\xysk{k})$ with different right-hand sides, in a classical
predictor-corrector approach~\cite{Mehrotra1992} or in the multiple
centrality correctors framework~\cite{Colombo2008, Gondzio2012a}. In
this work we will extensively use the fact that the backsolves
in~\eqref{ipmit} are less expensive than computing a good
preconditioner or factorization.

Although $J(x, y, s)$ is unsymmetric, under reasonable assumptions
Greif, Moulding and Orban showed that it has only real
eigenvalues~\cite{Greif2014}. Based on those results, Morini,
Simoncini and Tani~\cite{Morini2017} developed preconditioners for the
unreduced matrix and compared the performance of interior point
methods using unreduced matrices and augmented system. The unreduced
matrix has also two more advantages, when compared to augmented system
and normal equations. First, small changes of variables $x$ or $z$
result in small changes in $J(x, \lambda, z)$. Second, $J$ is the
Jacobian of $F$, so it is possible to approximate it by building
models or evaluating $F$ on some extra points. These two
characteristics are explored in this work.

Since $J$ is the Jacobian of $F$, it is natural to ask if it can be
approximated by evaluating $F$ in some points. Function $F$ is
composed by two linear and one nonlinear functions. Therefore, the
only part of $J$ which may change during iterations is the third
row. Moreover, it can be efficiently stored by just storing $A$, $Q$,
$x$ and $z$. Since computing and storing $J$ is inexpensive, the only
reason to use an approximation $B$ of $J$ is if system~\eqref{ipmit},
using $B_k$ instead of $J(\xysk{k})$, becomes easier to solve. That is
where quasi-Newton methods and low rank updates become an interesting
tool in interior point methods.

Quasi-Newton methods are well known techniques for solving large scale
nonlinear systems or nonlinear optimization problems. The main
motivation is to replace the Jacobian used by the traditional Newton
method by its good and inexpensive approximation. Originally, they
were useful to avoid computing the derivatives of $F$, but they have
become popular as a large scale tool, since they usually do not need
to explicitly build matrices and enjoy superlinear
convergence. Classical references for quasi-Newton methods
are~\cite{DennisJr.1996, Martinez2000b} for nonlinear equations
and~\cite{Nocedal2006} for unconstrained optimization.

In the review~\cite{Martinez2000b} about practical quasi-Newton
methods for solving nonlinear equations, Martínez suggests that there
is room for studying such techniques in the interior point
context. The author points to the work of Dennis Jr., Morshedi and
Turner~\cite{Dennis1987} which applies quasi-Newton techniques to make
the projections in Karmarkar's algorithm cheaper. The authors write
the interpolation equations associated with the linear system in
interior point iterations and describe a fast algorithm to compute
updates and also to update an already existing Cholesky
factorization. When solving general nonlinear programming problems by
IPMs, a well known approach is to replace the Hessian of the
Lagrangian function by low rank approximations~\cite{Nocedal2006}.

% Given the good
% structure of the linear systems, it is possible to use structured
% quasi-Newton methods, although the authors have no knowledge of such
% application to interior point methods.

In 2000, Morales and Nocedal~\cite{Morales2000a} used quasi-Newton
arguments to show that the directions calculated by the Conjugate
Gradient algorithm can be used to build an automatic preconditioner
for the matrix under consideration. The preconditioner is a sequence
of rank-one updates of an initial diagonal matrix. Such approach is
efficient when solving a sequence of linear systems with the same (or
a slowly varying) coefficient matrix. Based on those ideas, a limited
memory BFGS-like preconditioner for positive definite matrices was
developed in~\cite{Gratton2011} and was specialized for symmetric
indefinite matrices in~\cite{Gratton2016} . Recently, Bergamaschi
\emph{et al.}~\cite{Bergamaschi2018} developed limited-memory
BFGS-like preconditioners to KKT systems arising from IP iterations
and described their spectral properties. The approach was able to
reduce the number of iterations in the Conjugate Gradient algorithm,
but the approximation deteriorates as the number of interior point
iterations increase. Also, extra linear algebra has to be performed to
ensure orthogonality of the vectors used to build the updates.

In all works, with exception of~\cite{Dennis1987}, the main focus was
to use low rank updates of an already computed preconditioner such
that new preconditioners are constructed in an inexpensive way and
reduce the overall number of linear algebra iterations. In the present
work, our main objective is to work directly with nonlinear equations
and use low rank secant updates for computing the directions in the IP
iterations. We use least change secant updates, in particular Broyden
updates, and replace the Newton system~\eqref{ipmit} by an equivalent
one. Some properties of the method are presented and extensive
numerical experiments are performed. The main features of the proposed
approach are:
\begin{itemize}
\item Low rank approximations are matrix-free and use only vector
  multiplications and additions;

\item The quasi-Newton method for solving~\eqref{ipmit} can be easily
  inserted into an existing IPM;

\item The number of factorizations is reduced for small and large
  instances of linear and quadratic problems;

\item When the cost of the factorization is considerably higher than
  the cost of the backsolves, the total CPU time is also decreased.
\end{itemize}

In Section~\ref{back} we discuss the basic ideas of quasi-Newton
methods, in particular the Broyden method, which is extensively used
in the work. In Section~\ref{qn} we show that, if the initial
approximation is good enough, least change secant updates preserve
most of the structure of the true coefficient matrix and a traditional
IP iteration can be performed with the cost of computing correctors
only. New low rank secant updates, which are able to exploit the
sparsity of $J$ are also discussed. In Section~\ref{implementation} we
describe the aspects of a successful implementation of a quasi-Newton
interior point method. In Section~\ref{numerical} we compare our
approach with a research implementation of the primal-dual IPM for
solving small- and medium-sized linear and quadratic
problems. Finally, in Section~\ref{conclusions} we draw the
conclusions and mention possible extensions of the method.

\paragraph{Notation.} Throughout this work we use $F_k$ and
  $J_k$ as short versions of vector $F(\xysk{k})$ and matrix
  $J(\xysk{k})$, respectively. The vector $e$ denotes the vector of
  ones of appropriate dimension.

\section{Background for quasi-Newton methods}
\label{back}

Quasi-Newton methods can be described as algorithms which use
approximations to the Jacobian in the Newton method in order to solve
nonlinear systems. The approximations are generated using information
from previous iterations. Suppose that we want to find $\qnv \in \R^N$
such that $F(\qnv) = 0$, where $F : \R^N \to \R^N$ is continuously
differentiable. Given the current point $\qnv^k$ at iteration $k$,
Newton method builds a linear model of $F$ around $\qnv^k$ in order to
find $\qnv^{k + 1}$. Now, suppose that $\qnv^k$ and $\qnv^{k + 1}$
have already been calculated and let us create a linear model for $F$
around $\qnv^{k + 1}$:
\begin{equation}
  \label{linmodel}
  M_{k + 1}(\qnv) = F(\qnv^{k + 1}) + B_{k + 1} (\qnv - \qnv^{k + 1}).
\end{equation}
The choice $B_{k + 1} = J_{k + 1}$ results in the Newton method for
iteration $k + 1$. In secant methods, $B_{k + 1}$ is
constructed such that $M_{k + 1}$ interpolates $F$ at $\qnv^k$ and
$\qnv^{k + 1}$, which gives us the \emph{secant equation}
\begin{equation}
  \label{seceq}
  B_{k + 1} s_k = y_k,
\end{equation}
where $s_k = \qnv^{k + 1} - \qnv^k$ and
$y_k = F(\qnv^{k + 1}) - F(\qnv^k)$. When $s_k \ne 0$ and $N > 1$
there are more unknowns than equations and several choices for
$B_{k + 1}$ exist~\cite{DennisJr.1979, Martinez2000b}.

Let $B_k$ be the current approximation to $J_k$, the Jacobian of $F$
at $\qnv^k$ (it can be $J_k$ itself, for example). One of the most
often used simple secant approximations for unsymmetric Jacobians is
given by the Broyden ``good'' method. Given $B_k$, a new approximation
$B_{k + 1}$ to $J_{k + 1}$ is given by
\begin{equation}
  \label{gbroyd1}
  B_{k + 1} = B_k + \frac{(y_k - B_k s_k) s_k^T}{s_k^T s_k}.
\end{equation}
Matrix $B_{k + 1}$ is the closest matrix to $B_k$, in Frobenius norm,
which satisfies~\eqref{seceq}. The update of the Broyden method
belongs to the class of least change secant updates, since $B_{k + 1}$
is a rank-one update of $B_k$. As we are interested in solving a
linear system, it may be interesting to analyze matrix
$B_{k+1}^{-1} = H_{k + 1}$, which is obtained by the well known
Sherman-Morrison-Woodbury formula:
\begin{equation}
  \label{gbroyd2}
  H_{k + 1} = H_k + \frac{(s_k - H_k y_k) s_k^T H_k}{s_k^T H_k y_k}
  = \left( I + \frac{u_k s_k^T}{\rho_k} \right) H_k,
\end{equation}
where $u_k = s_k - H_k y_k$ and $\rho_k = s_k^T H_k y_k$. We can see
that $H_{k + 1}$ is also a least change secant update of $H_k$. To
store $H_{k + 1}$, one needs first to compute $H_k y_k$ and then store
one scalar and two vectors. Storing $u_k$ is more efficient than
storing $H_k s_k$ when $H_{k + 1}$ is going to be used more than
once. According to~\eqref{gbroyd2}, the cost of computing
$H_{k + 1} v$ is the cost of computing $H_k v$ plus one scalar product
and one sum of vectors times a scalar. After $\ell$ updates of an
initial approximation $B_{k - \ell}$, current approximation $H_{k}$ is
given by
\[
H_{k} = \left( I + \frac{u_{k - 1} s_{k - 1}^T}{\rho_{k - 1}} \right)
H_{k - 1} = \left[ \prod_{j = 1}^{\ell} \left( I + \frac{u_{k - j}
      s_{k - j}^T}{\rho_{k - j}} \right) \right] H_{k - \ell}.
\]

Instead of updating $B_k$ and then computing its inverse, the Broyden
``bad'' method directly computes the least change secant update of the
inverse:
\begin{equation}
  \label{bbroyd1}
H_{k + 1} = H_k + \frac{(s_k - H_k y_k) y_k^T}{y_k^T y_k} =
% H_k
% \left(I - \frac{ y_k y_k^T}{y_k^T y_k} \right) + \frac{ s_k
%   y_k^T}{y_k^T y_k} =
H_k V_k + \frac{s_k y_k^T}{\rho_k},
\end{equation}
where $V_k = \left( I - \frac{y_k y_k^T}{\rho_k} \right)$ and
$\rho_k = y_k^T y_k$. Similarly to $B_{k + 1}$ in~\eqref{gbroyd2},
$H_{k + 1}$ given by~\eqref{bbroyd1} is the closest matrix of $H_k$,
in the Frobenius norm, such that $H_{k + 1}^{-1}$
satisfies~\eqref{seceq}. The cost of storing $H_{k + 1}$ is lower than
that of~\eqref{gbroyd2}, since vectors $s_k$ and $y_k$ have already
been computed. The cost of calculating $H_{k + 1} v$ is higher: it
involves one scalar product, two sums of vector times a scalar and
$H_k v$. After $\ell$ updates of an initial approximation
$H_{k - \ell}$, current approximation $H_{k}$ is given by
\begin{equation}
  \label{bbroyd2}
  \begin{split}
    H_{k} & = H_{k - 1} V_{k - 1} + \frac{s_{k - 1} y_{k -
        1}^T}{\rho_{k - 1}} \\
    & = H_{k - \ell} \left( \prod_{j = k - \ell}^{k - 1}
      V_{j} \right) + \sum_{i = 1}^{\ell} \left( \frac{s_{ k - i}
        y_{ k - i}^T}{\rho_{ k - i}}
      \prod_{j = k - i + 1}^{k - 1} V_{j} \right)
 % & = H_k V_k \cdots V_{k + \ell - 1}
 %    % + \frac{s_{k + \ell - 1} y_{k + \ell- 1}^T}{\rho_{k + \ell - 1}}
 %    + \sum_{j = \ell - 1}^0 \left( \frac{s_{k + j} y_{k + j}^T}{\rho_{k
 %          + j}} V_{k + j + 1} \cdots V_{k + \ell - 1} \right).
  \end{split}
\end{equation}

Approach~\eqref{bbroyd1} has some advantages
over~\eqref{gbroyd2}. First, it does not need to compute $H_k v$ for
constructing the update. When $H_k$ is a complicated matrix, this is a
costly operation. Second, unlike~\eqref{gbroyd2}, matrices $V_{j}$
depend solely on $y_{j}$ and $s_{j}$ for all $j = 1, \dots, \ell$, so
it is possible to replace $H_{k - \ell}$ by different matrices without
updating the whole structure. This is suitable to be applied in a
limited-memory scheme~\cite{Gratton2016}. Third, the computation of
$H_{k} v$ can be efficiently implemented in a scheme similar to the
BFGS update described in~\cite{Nocedal2006}, as we show in
Algorithm~\ref{bbalg}. Unfortunately, the Broyden ``bad'' method is
known to behave worse in practice than the ``good''
method~\cite{DennisJr.1996}. To avoid the extra cost of computing
$H_{k} y_{k}$ in~\eqref{gbroyd2} it is common to compute a Cholesky or
LU factorization of $B_{k - \ell}$ and work directly
with~\eqref{gbroyd1}, performing rank-one updates of the
factorization, which can be efficiently implemented~\cite{Gill1974}.

\begin{algorithm}[h]
  \SetNlSty{textbf}{}{.}
  \DontPrintSemicolon

  \SetKwInOut{Data}{Data}
  \SetKwInOut{Input}{Input}
  \SetKwInOut{Output}{Output}

  \Data{ $H_{k - \ell} \in \R^{N \times N}$ and triples
    $(s_{k - j}, y_{k - j}, \rho_{k - j})$, for $j = 1, \dots, \ell$ }

  \Input{$v \in \R^N$}

  \Output{$r = H_{k} v$}

  \nl $q \leftarrow v$\;

  \lnl{bbalg:alpha} \For{$j = 1, \dots, \ell$}{

    \tcc*[l]{Store scalar $y_{k - j}^T (V_{k - j + 1} \cdots V_{k - 1}) v / \rho_{k - j}$}
    $\alpha_{j} \leftarrow (y_{k - j}^T q) / \rho_{k - j}$

    \tcc*[l]{Compute vector $(V_{k - j} \cdots V_{k - 1}) v$}
    $q \leftarrow q - \alpha_{j} y_{k - j}$

  }

  \lnl{bbalg:solving} $r \leftarrow H_{k - \ell} q$\;

  \nl \For{$i = 1, \dots, \ell$}{

    \tcc*[l]{Add the term
      $\left( y_{k - i}^T V_{k - i + 1} \cdots V_{k - 1} v / \rho_{k - i}
      \right) s_{k - i}$}
    $r \leftarrow r + \alpha_{i} s_{k - i}$ }

  \caption{Algorithm for matrix-vector multiplications on Broyden
    ``bad'' update.}
  \label{bbalg}
\end{algorithm} 

The class of rank-one least change secant updates can be generically
represented by updates of the form
\begin{equation}
  \label{secupdt}
  B_{k + 1} = B_k + \frac{(y_k - B_k s_k) w_k^T}{w_k^T s_k},
\end{equation}
where $w_k^T s_k \ne 0$. Setting $w_k = s_k$ defines the Broyden
``good'' method and $w_k = B_k^T y_k$ defines the Broyden ``bad''
method. Several other well known quasi-Newton methods fit in
update~\eqref{secupdt}, such as the Symmetric Rank-1 update used in
nonlinear optimization, which defines $w_k = y_k - B_k s_k$.
See~\cite{DennisJr.1979, DennisJr.1996} for details on least change
secant updates.

\section{A quasi-Newton approach for IP iterations}
\label{qn}

According to the general description of primal-dual IPMs in
Section~\ref{intro}, we can see that, at each iteration, they perform
one Newton step associated with the nonlinear system~\eqref{ipnls},
for decreasing values of $\mu$. Each step involves the computation of
the Jacobian of $F$ and the solution of a linear system~\eqref{ipmit}.

Our proposal for this work is to perform one quasi-Newton step to
solve~\eqref{ipnls}, replacing the true Jacobian $J(x, \lambda, z)$ by
a low rank approximation $B$. The idea might seem surprising at first
glance, since, for quadratic problems, $J(x, \lambda, z)$ is very
cheap to evaluate. In this section we further develop the quasi-Newton
ideas applied to interior point methods and show that they might help
to reduce the cost of the linear algebra when solving~\eqref{def:qp}.

It is important to note that $F$ and $J$ discussed in
Section~\ref{back} will be given by~\eqref{def:f} and~\eqref{def:j},
respectively, in the interior point context, which highlights the
importance of using the unreduced matrix in our analysis. Therefore,
variable $\qnv$ in Section~\ref{back} is given by $(x, \lambda, z)$
and, consequently, $N = 2n + m$.

\subsection{Initial approximation and update}

Suppose that $k \ge 0$ is an interior point iteration for which
system~\eqref{ipmit} was solved and $(\xysk{k + 1})$ was calculated,
using any available technique. Usually, solving~\eqref{ipmit} involves
an expensive factorization or the computation of a good preconditioner
associated with $J_k$.  Most traditional quasi-Newton methods
for general nonlinear systems compute $B_k$ by finite differences or
use a diagonal matrix as the initial approximation. According to
Section~\ref{back}, it is necessary to have an initial approximation
of $J_k$ in order to generate approximation $B_{k + 1}$ of $J_{k + 1}$
by low rank updates.  Most of traditional quasi-Newton methods for
general systems compute $B_k$ by finite differences or use a diagonal
matrix. Since $J_k$ have already been computed, we will define it as
$B_k$, i.e., the perfect approximation to $J_k$. It is clear that, in
such case, $H_k = J_k^{-1}$ is the approximation to $J_k^{-1}$.

In order to compute $B_{k + 1}$, vectors $s_k$ and $y_k$ in secant
equation~\eqref{seceq} have to be built:
\begin{equation}
  \label{skyk}
  \begin{aligned}
    s_{k} & = 
    \begin{bmatrix}
      s_{k,x} \\ s_{k,\lambda} \\ s_{k,z}
    \end{bmatrix}
    =
    \begin{bmatrix}
      x^{k + 1} - x^k \\ \lambda^{k + 1} - \lambda^k \\ z^{k + 1} - z^k
    \end{bmatrix}
    \\
    y_{k} & = 
    \begin{bmatrix}
      y_{k,c} \\ y_{k,b} \\ y_{k,\mu}
    \end{bmatrix}
    = F(\xysk{k + 1}) - F(\xysk{k}) \\
    & =
    \begin{bmatrix}
      - Q s_{k,x} + A^T s_{k,\lambda} + s_{k,z} \\ A s_{k,x} \\ X^{k + 1}
      Z^{k + 1} e - X^{k} Z^{k} e
    \end{bmatrix}
    .
  \end{aligned}
\end{equation}
The use of $J_k$ as the initial approximation ensures that the first
two block elements of $B_k s_k - y_k$ are zero. This is a well known
property of low rank updates given by~\eqref{secupdt} when applied to
linear functions (see~\cite[Ch. 8]{DennisJr.1996}). In
Lemma~\ref{l:struct} we show that rank-one secant updates maintain
most of the good sparsity structure of approximation $B_k$ when its
structure is similar to the true Jacobian of $F$.

\begin{lemma}
  \label{l:struct}
  Let $J$ be the Jacobian of $F$ given by~\eqref{def:f}. If the least
  change secant update $B_{k + 1}$ for approximating $J_{k + 1}$ is
  computed by~\eqref{secupdt} using $w_k^T =
  \begin{bmatrix}
    a_k & b_k & c_k
  \end{bmatrix}^T
  $, $a_k, c_k \in \R^n$, $b_k \in \R^m$, and $B_{k}$ is defined by
  \[
  B_{k} =
  \begin{bmatrix}
    - Q & A^T & I \\
      A & 0   & 0 \\
      M^1_{k} & M^2_{k} & M^3_{k}
  \end{bmatrix}
  \]
  then
  \[
  B_{k + 1} =
  \begin{bmatrix}
    - Q & A^T & I \\
      A & 0   & 0 \\
      M^1_{k + 1} & M^2_{k + 1} & M^3_{k + 1}
  \end{bmatrix},
  \]
  where $M^i_{k + 1}$ is a rank-one update of $M^i_{k}$, for
  $i = 1, 2, 3$. In addition, if $M^2_k = 0$ and $b_k = 0$, then
  $M^2_{k + 1} = 0$.
\end{lemma}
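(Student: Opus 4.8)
The plan is to substitute the residual $y_k - B_k s_k$ into the generic rank-one update~\eqref{secupdt} and evaluate the result block by block. Everything hinges on one observation: since the first two block rows of $B_k$ are exactly the (constant) first two block rows of the true Jacobian~\eqref{def:j}, and the first two components of $F$ in~\eqref{def:f} are affine, the residual $y_k - B_k s_k$ is supported on its third block only, so the correction term in~\eqref{secupdt} leaves the top two block rows of $B_k$ untouched.

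Concretely, I would first expand, using the block form of $s_k$ from~\eqref{skyk},
\[
B_k s_k =
\begin{bmatrix}
-Q s_{k,x} + A^T s_{k,\lambda} + s_{k,z} \\
A s_{k,x} \\
M^1_k s_{k,x} + M^2_k s_{k,\lambda} + M^3_k s_{k,z}
\end{bmatrix},
\]
and compare with $y_k = F(\xysk{k+1}) - F(\xysk{k})$, whose first block equals $-Q s_{k,x} + A^T s_{k,\lambda} + s_{k,z}$ and whose second block equals $A s_{k,x}$ (this is just affineness of those two components of $F$, with linear parts $\begin{bmatrix} -Q & A^T & I \end{bmatrix}$ and $\begin{bmatrix} A & 0 & 0 \end{bmatrix}$). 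Therefore
\[
y_k - B_k s_k = \begin{bmatrix} 0 \\ 0 \\ g_k \end{bmatrix},
\qquad
g_k := y_{k,\mu} - M^1_k s_{k,x} - M^2_k s_{k,\lambda} - M^3_k s_{k,z} \in \R^n ,
\]
which is precisely the ``first two block elements of $B_k s_k - y_k$ are zero'' property mentioned after~\eqref{skyk}, a standard feature of the updates~\eqref{secupdt} on affine maps (see~\cite[Ch.~8]{DennisJr.1996}).

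Finally I would plug this into~\eqref{secupdt}. Writing $w_k = \begin{bmatrix} a_k \\ b_k \\ c_k \end{bmatrix}$ and $\tau_k := w_k^T s_k \ne 0$, the correction is
\[
\frac{(y_k - B_k s_k)\, w_k^T}{\tau_k}
= \frac{1}{\tau_k}
\begin{bmatrix}
0 & 0 & 0 \\
0 & 0 & 0 \\
g_k a_k^T & g_k b_k^T & g_k c_k^T
\end{bmatrix},
\]
so $B_{k+1}$ retains the same first two block rows and has $M^i_{k+1} = M^i_k + g_k d_k^T / \tau_k$ with $d_k$ equal to $a_k$, $b_k$, $c_k$ for $i = 1, 2, 3$ respectively; each of these is a rank-one update of $M^i_k$ (rank zero if $g_k = 0$ or $d_k = 0$). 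The additional claim is then immediate: if $M^2_k = 0$ and $b_k = 0$ then $M^2_{k+1} = 0 + g_k\, 0^T / \tau_k = 0$.

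There is no genuinely hard step here; the only care required is in the block bookkeeping and in making explicit why the residual's first two blocks vanish. Once that is noticed, the rest is a one-line substitution into~\eqref{secupdt}.
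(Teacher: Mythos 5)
Your proof is correct and follows essentially the same route as the paper: it identifies that the residual $y_k - B_k s_k$ vanishes in its first two blocks (your $g_k$ is the paper's $u_k$) and then reads off from~\eqref{secupdt} that only the third block row of $B_k$ is modified, each $M^i_k$ by a rank-one term, with $M^2_{k+1}=0$ following immediately when $M^2_k=0$ and $b_k=0$. Your version merely spells out the affineness argument and the block bookkeeping more explicitly than the paper does.
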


\begin{proof}
  By the definition of $s_k$ and $y_k$ in~\eqref{skyk} it is easy to
  see that $y_k - B_k s_k =
  \begin{bmatrix}
    0 & 0 & u_k
  \end{bmatrix}^T
  $, where
  \[
  u_k = (X^{k + 1} Z^{k + 1} - X^k Z^k) e - M^1_{k} s_{k, x} - M^2_{k}
  s_{k,\lambda} - M^3_{k} s_{k,z}.
  \]
  Using the secant update~\eqref{secupdt}, we have that the first two
  rows of $B_k$ are kept the same and
  \[
  \begin{aligned}
    M^1_{k + 1} & = M^1_k + u_k a_k^T / (w_k^T s_k) \\
    M^2_{k + 1} & = M^2_k + u_k b_k^T / (w_k^T s_k) \\
    M^3_{k + 1} & = M^3_k + u_k c_k^T / (w_k^T s_k).
  \end{aligned}
  \]
  It is easy to see that $M^2_{k + 1} = 0$ when $M^2_k = 0$ and
  $b_k = 0$.
\end{proof}

By Section~\ref{back} we know that Broyden ``good'' and ``bad''
updates are represented by specific choices of $w_k$ and, therefore,
enjoy the consequences of Lemma~\ref{l:struct}. Unfortunately, not
much can be said about the structure of the ``third row'' of
$B_{k + 1}$.  When $B_k = J_k$, the diagonal structure of blocks $Z^k$
and $X^k$, as well as the zero block in the middle, are likely to be
lost. However, if we select $ w_k^T =
\begin{bmatrix}
  s_{k,x} & 0 & s_{k,z}
\end{bmatrix}^T $,
then, by Lemma~\ref{l:struct}, the zero block is kept in $B_{k + 1}$.
The update given by this choice of $w_k$ is a particular case of
Schubert's quasi-Newton update for structured and sparse
problems~\cite{Schubert1970}. This update minimizes the distance to
$B_k$ on the space of the matrices that satisfy~\eqref{seceq} and have
the same block sparsity pattern of $B_k$~\cite{DennisJr.1979}. Using
the Sherman-Morrison-Woodbury formula, we also have the update for
$H_k$:
\[
H_{k + 1} = \left( I - \frac{(H_k y_k - s_k) w_k^T}{w_k^T H_k y_k}
\right) H_k^{-1},
\]
which only needs an extra computation of $H_k y_k$ to be stored. There
is no need to store $w_k$, since it is composed by components of
$s_k$. We can say that this approach is inspired in the Broyden
``good'' update.

On the other hand, if we use $w_k^T =
\begin{bmatrix}
  0 & y_{k, b} & y_{k, \mu}
\end{bmatrix}^T B_k $,
then we still have $M^2_{k + 1} = 0$ by Lemma~\ref{l:struct} and, in
addition, we are able to remove the calculation $H_k y_k$ in the
inverse. This approach is inspired by the Broyden ``bad'' update and
results in the following update
\begin{equation}
  \label{sbbroyd}
  H_{k + 1} = H_k + \frac{(s_k - H_k y_k) \begin{bmatrix}
      0 & y_{k, b} & y_{k, \mu}
    \end{bmatrix}^T
  }{y_{k, b}^T y_{k, b} + y_{k, \mu}^T y_{k, \mu}}.
\end{equation}
Up to the knowledge of the authors, this update has not been
theoretically studied in the literature.

Lemma~\ref{l:struct} also justifies our choice to work with
approximations of $J^{-1}$ rather than $J$. After $\ell > 0$ rank-one
updates, if $B_{k} u = v$ is solved by factorizations and backsolves,
it would be necessary to perform $\ell$ updates on the factorization
of initial matrix $B_{k - \ell}$, what could introduce many nonzero
elements. A clear benefit of defining
$B_{k - \ell} = J_{k - \ell}$ is that computing $H_{k} v$ uses the
already calculated factorizations/preconditioners for $B_{k - \ell}$,
which were originally used to solve~\eqref{ipmit} at iteration
$k - \ell$. Step~\ref{bbalg:solving} of Algorithm~\ref{bbalg} is an
example of low rank update~\eqref{bbroyd2}. Clearly, we do not
explicitly compute $H_{k - \ell} v$, but instead solve the system
$B_{k - \ell} u = v$.

\subsection{Computation of quasi-Newton steps}

Having defined how quasi-Newton updates are initialized and
constructed, we now have to insert the approximations in an interior
point framework. Denoting $(\xysk{0})$ as the starting point of the
algorithm, at the end of any iteration $k$ it is possible to build a
rank-one secant approximation of the unreduced matrix to be used at
iteration $k + 1$.  Let us consider iteration $k$, where $k \ge 0$ and
$\ell \ge 0$. If $\ell = 0$, then, by the previous subsection,
$B_{k - \ell} = B_k = J_k$ and the step in the interior point
iteration is the usual Newton step, given by~\eqref{ipmit}. If
$\ell > 0$, we have a quasi-Newton step, which can be viewed as a
generalization of~\eqref{ipmit}, and is computed by solving
\begin{equation}
  \label{qnipmit}
  B_k
  \begin{bmatrix}
    \dxk \\ \dyk \\ \dsk
  \end{bmatrix}
  =
  v
\end{equation}
or, equivalently, by performing $H_k v$. All the other steps of the
IPM remain exactly the same.

When $\ell > 0$, the cost of solving~\eqref{qnipmit} depends on the
type of update that is used. In general, it is the cost of solving
system $J_{k - \ell} r = q$ (or, equivalently, $J_{k - \ell}^{-1} q$)
plus some vector multiplications and additions. However, since
$J_{k - \ell}$ has already been the coefficient matrix of a linear
system at iteration $k - \ell$, it is usually less expensive than
solving for the first time. That is one of the main improvements that
a quasi-Newton approach brings to interior point methods.

When the Broyden ``bad'' update~\eqref{bbroyd2} is used together with
defining $B_{k - \ell} = J_{k - \ell}$ as the initial approximation,
it is possible to derive an alternative interpretation
of~\eqref{qnipmit}. Although this update is known to have worse
numerical behavior when compared with the ``good''
update~\eqref{gbroyd2}, this interpretation can result in a more
precise implementation, which is described in Lemma~\ref{l:bbroyd}.

\begin{lemma}
  \label{l:bbroyd}
  Assume that $k, \ell \ge 0$ and $H_{k}$ is the approximation of
  $J_{k}^{-1}$ constructed by $\ell$ updates~\eqref{bbroyd2} using
  initial approximation $H_{k - \ell} = J_{k - \ell}^{-1}$.  Given
  $v \in \R^{2n + m}$, the computation of $r = H_k v$ is equivalent to
  the solution of
  \begin{equation*}
    \label{l:bbroyd:e1}
    J_{k - \ell} r = v + 
    \begin{bmatrix}
      0 \\ 0 \\ \sum \limits_{i = 1}^{\ell} \alpha_i \left( Z^{k -
          \ell} s_{k - i, x} + X^{k - \ell} s_{k - i,z} - y_{k - i,
          \mu} \right)
    \end{bmatrix},
  \end{equation*}
  where
  $\displaystyle \alpha_i = \frac{y_{k - i}^T \prod_{j = k - i + 1}^{k
      - 1} V_{j}}{\rho_{k - i}} v$, for $i = 1, \dots, \ell$.
\end{lemma}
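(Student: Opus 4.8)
The plan is to start from the closed form~\eqref{bbroyd2} of $H_{k}$, evaluate $r = H_{k} v$ explicitly, and then apply $J_{k-\ell}$ to the result. Put $q := \left(\prod_{j=k-\ell}^{k-1} V_{j}\right) v$ and read off the scalars $\alpha_{i}$ exactly as in the statement; then~\eqref{bbroyd2} gives $r = H_{k-\ell}\, q + \sum_{i=1}^{\ell} \alpha_{i} s_{k-i}$. Because the initial approximation is $H_{k-\ell} = J_{k-\ell}^{-1}$ (so $J_{k-\ell}$ is invertible and it suffices to check the claimed identity after multiplying by $J_{k-\ell}$), applying $J_{k-\ell}$ yields $J_{k-\ell}\, r = q + \sum_{i=1}^{\ell} \alpha_{i}\, J_{k-\ell} s_{k-i}$. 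From here the proof splits into two short identities: one describing $J_{k-\ell} s_{k-i}$, and one showing $q + \sum_{i=1}^{\ell} \alpha_{i} y_{k-i} = v$.

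For the first identity I would use the block form~\eqref{def:j} of $J$ together with the definitions of $s_{k-i}$ and $y_{k-i}$ in~\eqref{skyk}. As already noted before Lemma~\ref{l:struct}, the first two block rows of $J$ are iteration independent, so the first two blocks of $J_{k-\ell} s_{k-i}$ are precisely $y_{k-i,c}$ and $y_{k-i,b}$; only the third block differs, $J_{k-\ell} s_{k-i}$ contributing $Z^{k-\ell} s_{k-i,x} + X^{k-\ell} s_{k-i,z}$ against the $y_{k-i,\mu}$ of $y_{k-i}$. Hence $J_{k-\ell} s_{k-i}$ equals $y_{k-i}$ plus the vector whose first two blocks vanish and whose third block is $Z^{k-\ell} s_{k-i,x} + X^{k-\ell} s_{k-i,z} - y_{k-i,\mu}$. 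Summing over $i$ with weights $\alpha_{i}$ already produces the correction vector appearing in the statement, modulo the term $\sum_{i} \alpha_{i} y_{k-i}$.

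The second identity is the real content. Here I would use that $V_{j} = I - y_{j} y_{j}^{T}/\rho_{j}$ with $\rho_{j} = y_{j}^{T} y_{j}$ (from~\eqref{bbroyd1}), so that $y_{j} y_{j}^{T}/\rho_{j} = I - V_{j}$. Then $\alpha_{i} y_{k-i} = \frac{y_{k-i} y_{k-i}^{T}}{\rho_{k-i}} \left(\prod_{j=k-i+1}^{k-1} V_{j}\right) v = (I - V_{k-i}) \left(\prod_{j=k-i+1}^{k-1} V_{j}\right) v$. Writing $w_{i} := \left(\prod_{j=k-i}^{k-1} V_{j}\right) v$ with the usual empty-product convention (so $w_{0} = v$ and $w_{\ell} = q$), this reads $\alpha_{i} y_{k-i} = w_{i-1} - w_{i}$, and $\sum_{i=1}^{\ell} \alpha_{i} y_{k-i}$ telescopes to $w_{0} - w_{\ell} = v - q$. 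Adding back $q$ gives $v$.

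Combining the two identities gives $J_{k-\ell}\, r = v$ plus the vector whose first two blocks vanish and whose third block is $\sum_{i=1}^{\ell} \alpha_{i}\left(Z^{k-\ell} s_{k-i,x} + X^{k-\ell} s_{k-i,z} - y_{k-i,\mu}\right)$, which is exactly the asserted equation. The only delicate point is the index bookkeeping for the nested products $\prod_{j} V_{j}$ — keeping the left-to-right order inherited from the derivation of~\eqref{bbroyd2} and handling the empty products at $i = 1$ and at the $H_{k-\ell}$ endpoint — together with spotting the telescoping; once $I - V_{j} = y_{j} y_{j}^{T}/\rho_{j}$ is in hand, the rest is routine linear algebra.
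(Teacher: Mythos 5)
Your proposal is correct and follows essentially the same route as the paper: expand $H_k v$ via~\eqref{bbroyd2}, use the recursion $V_j = I - y_j y_j^T/\rho_j$ (your explicit telescoping is exactly what the paper summarizes as ``the definition of $V_k$ applied recursively'') to turn $\bigl(\prod_j V_j\bigr)v$ into $v - \sum_i \alpha_i y_{k-i}$, then multiply by $J_{k-\ell}$ and observe that the first two blocks of $J_{k-\ell}s_{k-i} - y_{k-i}$ vanish by the block structure of $J$ (the paper cites Lemma~\ref{l:struct} and~\eqref{skyk} for this, while you compute it directly). The only cosmetic difference is the order in which you apply $J_{k-\ell}$ versus the telescoping; the content is identical.
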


\begin{proof}
  Using the expansion~\eqref{bbroyd1} of Broyden ``bad'' update, the
  definition of $\alpha_i$ and the fact that $H_{k} = J_{k}^{-1}$, we
  have that
  \begin{equation}
    \label{l:bbroyd:e2}
    \begin{split}
      r & = H_k v = H_{k - \ell} \left( \prod_{j = k - \ell}^{k - 1}
        V_{j} \right) v + \sum_{i = 1}^{\ell} \left( \frac{s_{ k - i}
          y_{ k - i}^T}{\rho_{ k - i}}
        \prod_{j = k - i + 1}^{k - 1} V_{j} \right) v \\
      & = J_{k - \ell}^{-1} \left( \prod_{j = k - \ell}^{k - 1} V_{j}
      \right) v + \sum_{i = 1}^{\ell} \alpha_i s_{k -
        i} \\
      & = J_{k - \ell}^{-1} \left( v - \sum_{i = 1}^{\ell} \alpha_i
        y_{ k - i} \right) + \sum_{i = 1}^{\ell} \alpha_i s_{k - i},
    \end{split}
  \end{equation}
  where the last equality comes from the definition of $V_{k}$
  in~\eqref{bbroyd1}, applied recursively. When $i = 1$, we assume
  that $\prod_{j = k - i + 1}^{k - 1} V_{j}$ results in the identity
  matrix, therefore $\alpha_1 = y_{k - 1}^T v / \rho_{k - 1}$.
  Multiplying $J_{k - \ell}$ on the left on both sides
  of~\eqref{l:bbroyd:e2}, we obtain
  \[
  J_{k - \ell} r = v + \sum_{i = 1}^{\ell} \alpha_i \left( J_{ k -
      \ell} s_{k - i} - y_{k - i} \right).
  \]
  By Lemma~\ref{l:struct} and definition~\eqref{skyk}, the first two
  components of $J_{k - \ell} s_{k - i} - y_{k - i}$ are zero, for all
  $i$, which demonstrates the lemma.
\end{proof}

Lemma~\ref{l:bbroyd} states that only the third component of the right
hand side actually needs to be changed in order to compute Broyden
``bad'' quasi-Newton steps at iteration $k$. This structure is very
similar to corrector or multiple centrality correctors in IPMs and
reinforce the argument that the cost of computing a quasi-Newton step
is lower than the Newton step. It is important to note that scalars
$\alpha_i$ are the same as the ones computed at step~\ref{bbalg:alpha}
of Algorithm~\ref{bbalg}.

\subsection{Dealing with regularization}

Rank-deficiency of $A$, near singularity of $Q$ or the lack of strict
complementarity at the solution may cause matrix $J$, the augmented
system or the normal equations to become singular near the solution
of~\eqref{def:qp}. As the iterations advance, it becomes harder to
solve the linear systems. Regularization techniques address this issue
by adding small perturbations to $J$ in order to increase numerical
accuracy and convergence speed, without losing theoretical
properties. A common approach is to interpret the perturbation as the
addition of weighted proximal terms to the primal and dual
formulations of~\eqref{def:qp}. Saunders and
Tomlin~\cite{Saunders1996} consider fixed perturbations while Altman
and Gondzio~\cite{Altman1999} consider dynamic ones, computed at each
iteration. Friedlander and Orban~\cite{Friedlander2012} add extra
variables to the problem, expand the unreduced system and, after an
initial reduction, arrive in a regularized system similar
to~\cite{Altman1999}. In all these approaches, given reference points
$\hat x$ and $\hat \lambda$, the regularized matrix $J$
\begin{equation}
  \label{rj}
  J(x, \lambda, z) =
  \begin{bmatrix}
    - Q - R_p & A^T & I \\
    A &   R_d & 0 \\
    Z &   0 & X
  \end{bmatrix},
\end{equation}
where diagonal matrices $R_p \in \R^{n \times n}$ and
$R_d \in \R^{m \times m}$ represent primal and dual regularization,
respectively, can be viewed as the Jacobian of the following function
\[
\hat F(x, \lambda, z) = 
\begin{bmatrix}
  A^T \lambda - Q x - R_p (x - \hat x) - c \\
  A x + R_d (\lambda - \hat \lambda) - b \\
  X Z e
\end{bmatrix}.
\]
Any choice is possible for reference points $\hat x$ and
$\hat \lambda$. However, in order to solve the original Newton
system~\eqref{ipmit} and make use of the good properties of the
regularization~\eqref{rj} at the same time, they are usually set to
the current iteration points $x^k$ and $\lambda^k$, respectively,
which annihilates terms $R_p (x - \hat x)$ and
$R_d (\lambda - \hat \lambda)$ on the right hand side of~\eqref{ipmit}
during affine scaling steps.

Matrix $J$ given by~\eqref{rj} now depends on $R_p$ and $R_d$ in
addition to $x$ and $z$. The regularization terms $R_p$ and $R_d$ do
not need to be considered as variables, but if new regularization
parameters are used, a new factorization or preconditioner needs to be
computed. Since this is one of the most expensive tasks of the IP
iteration, during quasi-Newton step $k$ the regularization parameters
are not allowed to change from those selected at iteration $k - \ell$,
where the initial approximation was selected. That is a reasonable
decision, as the system that is actually being solved in practice has
the coefficient matrix from iteration $k - \ell$. The fact that the
regularization terms are linear in $\hat F$ implies, by
Lemma~\ref{l:struct}, that the structure of~\eqref{rj} is maintained
during least change secant updates.

The reference points have no influence in $J$, but they do influence
the function $\hat F$. Suppose, as an example, that $\ell = k$, i.e.,
the initial approximation for quasi-Newton is the Jacobian at the
starting point $(\xysk{0})$, and only quasi-Newton steps are taken in
the interior point algorithm. If we use $x^0$ and $\lambda^0$ as the
reference points and the algorithm converges, the limit point could be
very different from the true solution, as initial points usually are
far away from the solution, especially for infeasible IPMs. If we
update the reference points at each quasi-Newton iteration, as it is
usually the choice in literature~\cite{Altman1999, Friedlander2012},
we eliminate their effect on the right hand side of~\eqref{qnipmit}
during affine scaling steps. By~\eqref{linmodel}, $B_{k + 1}$
is the Jacobian of a linear approximation of $\hat F$ which
interpolates $(\xysk{k})$ and $(\xysk{k + 1})$. As the regularization
parameters are fixed during quasi-Newton iterations, the reference
points can be seen as simple constant shifts on $\hat F$, with no
effect on the Jacobian. Therefore, the only request is that $\hat F$
has to be evaluated at points $(\xysk{k})$ and $(\xysk{k + 1})$ using
the same reference points, when calculating $y_k$ by~\eqref{skyk}. The
effect of changing the reference points at each iteration in practice
is the extra evaluation of $\hat F$ at the beginning of iteration $k$.

\section{Implementation}
\label{implementation}

The quasi-Newton approach can easily be inserted into an existing
interior point method implementation. In this work, the primal-dual
interior point algorithm \hopdm~\cite{Gondzio1995} was modified to
implement the quasi-Newton approach.  Algorithm~\ref{qnipm} describes
the steps of a conceptual quasi-Newton primal-dual interior point
algorithm.

\begin{algorithm}[h]
  \SetNlSty{textbf}{}{.}
  \DontPrintSemicolon

  \SetKwInOut{Data}{Initialization}
  \SetKwInOut{Input}{Input}
  \SetKwInOut{Output}{Output}

  \Data{ $F$, $J$ and $(\xysk{0})$. Set $k \leftarrow 0$ and
    $\ell \leftarrow 0$. }

  \BlankLine

  \lnl{qnipm:sys} Solve system~\eqref{qnipmit} with different right
  hand sizes, if necessary, to compute step $(\dxk, \dyk, \dsk)$\;

  \BlankLine

  \nl Calculate $\alpha_P^k$ and $\alpha_D^k$ such that
  $(\xysk{k + 1})$ given by~\eqref{ipmss} satisfy
  $x^{k + 1}, \lambda^{k + 1} > 0$\;

  \BlankLine

  \lnl{qnipm:qn} Compute $s_k$ and $y_k$ by~\eqref{skyk}\;
  \BlankLine
  \If{will store quasi-Newton information,}{
    Store appropriate quasi-Newton information\;
    $\ell \leftarrow \ell + 1$}
  \Else{$\ell \leftarrow 0$}

  \nl $k \leftarrow k + 1$ and go back to step~\ref{qnipm:sys}\;

  \caption{Quasi-Newton Interior Point algorithm}
  \label{qnipm}
\end{algorithm} 

The most important element of Algorithm~\ref{qnipm} is $\ell$, the
memory size of the low rank update, which controls if the iteration
involves Newton or quasi-Newton steps. At step~\ref{qnipm:sys} several
systems~\eqref{qnipmit} might be solved, depending on the IPM
used. \hopdm\ implements the strategy of multiple centrality
correctors~\cite{Colombo2008}, which tries to maximize the step-size
at the iteration. \hopdm\ also implements the regularization
strategy~\eqref{rj}. Note in~\eqref{qnipmit} that we do not have to
care how the systems are solved, only how to implement the
matrix-vector multiplication $H_k v$ efficiently.

Step~\ref{qnipm:qn} is the most important step in a quasi-Newton IP
algorithm, since it decides whether or not quasi-Newton steps will be
used in the next iteration. Several possible strategies are discussed
in this section, as well as some implementation details.

Bound constraints
\begin{equation*}
  l \le x \le u, \quad l, u \in \R^n
\end{equation*}
can be considered in the general definition~\eqref{def:qp} of a
quadratic programming problem by using slack variables. \hopdm\
explicitly deals with bound constraints and increases the number of
variables to $4n + m$. When bound constraints are considered, function
$F$ is given by
\[
F(x, t, \lambda, z, w) =
\begin{bmatrix}
  A^T \lambda - Q x + z - w - c \\
  A x - b \\
  x + t - u \\
  X Z e \\
  T W e
\end{bmatrix}
\]
and the Jacobian $J$ is
\[
J(x, t, \lambda, z, w) = 
\begin{bmatrix}
  -Q & 0 & A^T & I & -I \\
  A  & 0 & 0   & 0 & 0 \\
  I  & I & 0   & 0 & 0 \\
  Z  & 0 & 0   & X & 0 \\
  0  & W & 0   & 0 & T
\end{bmatrix}.
\]
Note that, in this case, $l$ is eliminated by proper shifts, $u$
represents upper shifted constraints and $t$ represents slacks. All
the results and discussions considered so far can be easily adapted to
the bound-constrained case. Therefore, in order to keep notation
simple, we will refer to the more general and simpler
formulation~\eqref{def:qp} and work in the $(2n + m)$-dimensional
space.

\subsection{Storage of $H_{k}$ and computation of
  $H_{k} v$}

When solving quadratic problems, the Jacobian of function $F$ used in
a primal-dual interior point method is not expensive to compute and
has an excellent structure, which can be efficiently explored by
traditional approaches. Therefore, there is no point in explicitly
building approximation matrix $B_{k}$ (or $H_{k}$) since, by
Lemma~\ref{l:struct}, they would be denser. For an efficient
implementation of the algorithm only the computation $H_{k} v$ has to
be performed in~\eqref{qnipmit}. To accomplish this task, we store
\begin{itemize}
\item Initial approximation $J_{k - \ell}$ and
\item Triples $(s_{k - i}, u_{k - i}, \rho_{k - i})$ or
  $(s_{k - i}, y_{k - i}, \rho_{k - i})$ , $i = 1, \dots, \ell$, if
  updates are based on Broyden ``good'' or ``bad'' method,
  respectively.
\end{itemize}

In order to store $J_{k - \ell}$ we have to store vectors
$x^{k - \ell}$ and $\lambda^{k - \ell}$, since all other blocks of $J$
are constant. If regularization is being used, vectors $R_p$ and $R_d$
used at iteration $k - \ell$ are also stored. The reference points are
not stored. The most important structure to store is the factorization
or the preconditioner computed when solving~\eqref{qnipmit} at
iteration $k - \ell$ for the first time. Without this information, the
computation of $H_k v$ would have the same computational cost of using
the true matrix $J_{k}$. Data is stored at step~\ref{qnipm:qn} of
Algorithm~\ref{qnipm}, whenever it has decided to store quasi-Newton
information and $\ell = 0$.

Regarding the triples, they are composed of two $(2n + m)$-dimensional
vectors and one scalar. Storing $y_{k - i}$ is the most expensive part
in Broyden ``bad'' updates, since function $F$ has to be evaluated
twice. In Broyden ``good'' updates the computation of $u_{k - i}$ is
the most expensive, due to the computation of $H_{k - i} y_{k - i}$.

The implementation of an algorithm to compute $H_k v$ depends on the
selected type of low rank update. Algorithm~\ref{bbalg} is an
efficient implementation of the general Broyden ``bad''
update~\eqref{bbroyd2}. If the structure described by
Lemma~\ref{l:struct} is being used, then all vector multiplications
are performed before the solution of the linear system, as described
by Algorithm~\ref{sbbalg}. Both algorithms can be easily modified to
use updates of the form
$w_k^T = \begin{bmatrix} a_k & b_k & c_k
\end{bmatrix}^T
B_k$
in the generic update~\eqref{secupdt}. The only changes are the
storage of an extra vector and the computation of scalars $\alpha_i$
at step~\ref{bbalg:alpha}. The implementation of the sparse
update~\eqref{sbbroyd} is straightforward and there is no need to
store extra information. Algorithm~\ref{sbbalg} uses a little extra
computation, since vector $q$ is discarded after the computation of
all $\alpha_i$. On the other hand, there is no need to store blocks
$s_{k - i,\lambda}$, $i = 1, \dots, \ell$.

\begin{algorithm}[h]
  \SetNlSty{textbf}{}{.}
  \DontPrintSemicolon

  \SetKwInOut{Data}{Data}
  \SetKwInOut{Input}{Input}
  \SetKwInOut{Output}{Output}

  \Data{ $J_{k - \ell} = J(\xysk{k - \ell})$ and $(s_{k - i}, y_{k - i}, \rho_{k - i})$,
    for $i = 1, \dots, \ell$ }

  \Input{$v \in \R^{2n + m}$}

  \Output{$r = H_{k} v$}

  \nl $q \leftarrow v$\;

  \nl \For{$i = 1, \dots, \ell$}{

    $\alpha_{i} \leftarrow (y_{k - i}^T q) / \rho_{k - i}$

    $q \leftarrow q - \alpha_{i} y_{k - i}$

  }

  \nl $q \leftarrow v$\;

  \nl \For{$i = 0, \dots, \ell - 1$}{

    $q \leftarrow q + 
    \begin{bmatrix}
      0 \\ 0 \\ \alpha_{i} \left( Z^{k - \ell} s_{k - i, x} + X^{k - \ell} s_{k
          - i, z} - y_{k - i, \mu} \right)
    \end{bmatrix}$ }

  \nl Solve $J_{k - \ell} r = q$\;

  \caption{Algorithm for matrix-vector multiplications in Broyden
    ``bad'' update using structural information}
  \label{sbbalg}
\end{algorithm} 

Algorithm~\ref{gbalg} describes the steps to compute $H_{k} v$ when
Broyden ``good'' update~\eqref{gbroyd2} is considered. Note that a
linear system is first solved, then a sequence of vector
multiplications and additions is applied. The algorithm is simpler and
more general than Algorithm~\ref{bbalg}, but it has to be called more
often in an interior point algorithm: to compute the steps
(step~\ref{qnipm:sys} in Algorithm~\ref{qnipm}) and to compute
$H_{k} y_{k}$, needed to build $u_{k}$ (step~\ref{qnipm:qn} in
Algorithm~\ref{qnipm}).  Algorithm~\ref{gbalg} is very general and can
be easily modified to consider any least change secant update of the
form~\eqref{secupdt} without extra storage requirements, although not
necessarily in an efficient way.

\begin{algorithm}[h]
  \SetNlSty{textbf}{}{.}
  \DontPrintSemicolon

  \SetKwInOut{Data}{Data}
  \SetKwInOut{Input}{Input}
  \SetKwInOut{Output}{Output}

  \Data{ $J_{k - \ell} = J(\xysk{k - \ell})$ and
    $(s_{k - i}, u_{k - i}, \rho_{k - i})$, for
    $i = 1, \dots, \ell$, as described in~\eqref{gbroyd2} }

  \Input{$v \in \R^{2n + m}$}

  \Output{$r = H_{k} v$}

  \nl Solve $J_{k - \ell} q = v$\;

  \nl $r \leftarrow q$\;

  \nl \For{$i = 1, \dots, \ell$}{

    $\alpha_{i} \leftarrow (s_{k - i}^T r) / \rho_{k - i}$

    $r \leftarrow r + \alpha_{i} u_{k - i}$

  }

  \caption{Algorithm for matrix-vector multiplications in Broyden
    ``good'' update}
  \label{gbalg}
\end{algorithm} 

\subsection{Size of $\ell$}

The cost of computing $H_k v$ increases as the quasi-Newton memory
$\ell$ increases. In addition, it was observed that the quality of the
approximation decreases when the quasi-Newton memory is
large~\cite{Bergamaschi2018}. In our implementation of
Algorithm~\ref{qnipm}, we also observed the decrease in the quality of
the steps when $\ell$ is too large. The decrease of the barrier
parameter $\mu_k = {x^k}^T z^k / n$ for different bounds on $\ell$ is
shown in Figure~\ref{fig:l}, for problem \texttt{afiro}, the smallest
example in Netlib test collection. In this example, Newton steps were
allowed after $\ell_\mathrm{max}$ quasi-Newton iterations, where
$\ell_\mathrm{max} \in \{0, 5, 20, 100, 200\}$. The maximum of 200
iterations was allowed.

We can see that if the Jacobian is only evaluated once
($\ell_\mathrm{max} = 200$) then the method is unable to converge in
200 iterations. As the maximum memory is reduced, the number of
iterations to convergence is also reduced. On the other hand, the
number of (possibly expensive) Newton steps is increased. When
$\ell_\mathrm{max} = 0$, i.e., no quasi-Newton steps, the algorithm
converges in 7 iterations. We take the same approach
as~\cite{Bergamaschi2018} and define an upper bound
$\ell_\mathrm{max}$ on $\ell$ in the implementation of
Algorithm~\ref{qnipm}. When this upper bound is reached, we set $\ell$
to 0, which, by~\eqref{qnipmit}, results in the computation of a
Newton step. The verification is performed at step~\ref{qnipm:qn} of
Algorithm~\ref{qnipm}.
% This is also known as limited-memory
% quasi-Newton~\cite{}. When $\ell$ reaches its upper bound we can
% replace the oldest triple $(s_{\bar k}, y_{\bar k}, \rho_{\bar k})$
% and update $B_{\bar k}$ to $B_{\bar k + 1}$, in a limited-memory
% strategy~\cite{Nocedal2006}.
This approach is also known as quasi-Newton with
restarts~\cite{Luksan1998} and differs from usual limited-memory
quasi-Newton~\cite{Nocedal2006}, where only the oldest information is
dropped.

\begin{figure}[ht]
  \centering
  \includegraphics{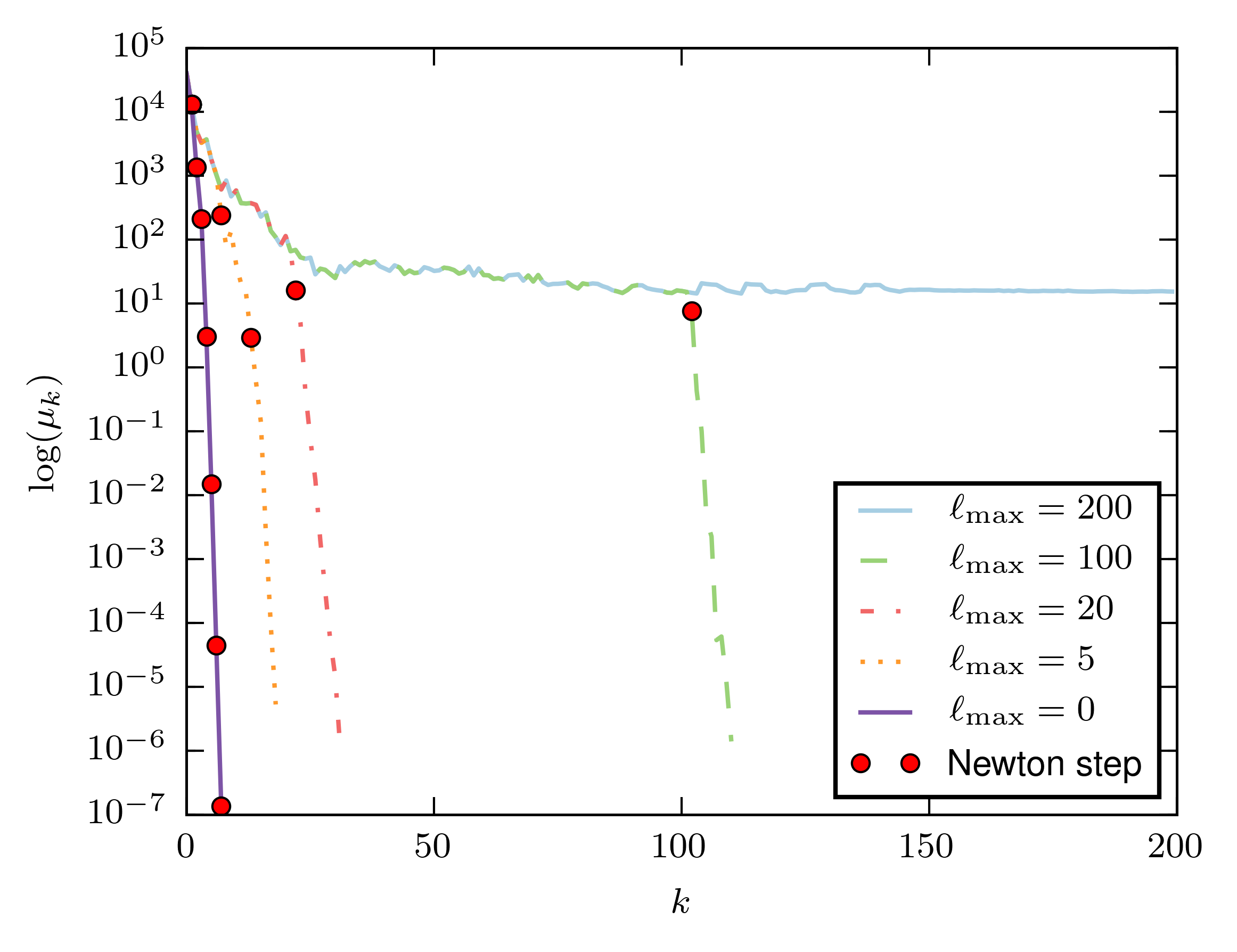}
  \caption{Small bounds for $\ell$ reduce the number of iterations,
    but increase the necessity of evaluating and factorizing the
    Jacobian. The circles represent iterations where Newton steps were
    calculated.}
  \label{fig:l}
\end{figure}

\subsection{The quasi-Newton steps}

The behavior of consecutive quasi-Newton steps depicted in
Figure~\ref{fig:l} reminds us that it is important to use the true
Jacobian in order to improve convergence of the method. However, we
would like to minimize the number of times the Jacobian is evaluated,
since it involves expensive factorizations and
computations. Unfortunately, to use only the memory bound as a
criterion to compute quasi-Newton steps is not a reasonable
choice. When $\ell_\mathrm{max} = 100$, for example, the algorithm
converges in 110 iterations, but it spends around 60 iterations
without any improvement. As the dimension of the problem increases,
this behavior is getting even worse. We can also see that the choice
$\ell_\mathrm{max} = 20$ is better for this problem, as the algorithm
converges in 31 iterations, computing only two times the Cholesky
factorization of the Jacobian.

The lack of reduction is related to small step-sizes $\alpha_P^k$ and
$\alpha_D^k$. Our numerical experience with quasi-Newton IP methods
indicates that the quasi-Newton steps often are strongly attracted to
the boundaries. The step-sizes calculated for directions originated
from a quasi-Newton predictor-corrector strategy are almost always
small and need to be fixed. Several strategies have been tried to
increase the step-sizes of those steps:
\begin{enumerate}[(i)]
\item \label{str:double} Perturb complementarity pairs $x_i z_i$
  for which the relative component-wise direction magnitude
  \begin{equation}
    \label{cwqnerror}
    \frac{|\left[ \dxk \right]_i|}{x_i^k}\quad \text{or}\quad
    \frac{|\left[ \dsk \right]_i|}{z_i^k},\quad i = 1, \dots, n
  \end{equation}
  is high and then recompute quasi-Newton direction;
\item \label{str:mc} Use multiple centrality
  correctors~\cite{Colombo2008};
\item \label{str:gent} Gentle reduction of $\mu$ on quasi-Newton
  iterations, selecting $\sigma_k$ close to 1 in the predictor and
  corrector steps.
\end{enumerate}
Note that the terms in~\eqref{str:double} are the inverse of the
maximum step-size allowed by each component.

The motivation of strategy~\eqref{str:double} is the strong relation
observed between components of the quasi-Newton direction which are
too large with respect their associated variable and components which
differ too much from the respective component of the Newton direction
for the same iteration, i.e.,
\begin{equation}
  \label{cwnqnerror}
  \frac{\left| \left[ {\dxk}^{(N)} - {\dxk}^{(QN)} \right]_i
    \right|}{x^k_i}\quad \text{and}\quad \frac{\left| \left[
        {\dsk}^{(N)} - {\dsk}^{(QN)} \right]_i
    \right|}{z^k_i},\quad i = 1, \dots, n.
\end{equation}
We display this relation in Figure~\ref{fig:alpha}(a) for one
iteration on linear problem \texttt{GE}. Positive spikes represent the
component-wise relative magnitude of quasi-Newton
steps~\eqref{cwqnerror} for each component of variables $x$ and
$z$. The higher the spikes, the smaller the step-sizes are. Negative
spikes represent the component-wise relative error between the Newton
and quasi-Newton directions~\eqref{cwnqnerror}. The lower the spikes,
the larger the relative difference between Newton and quasi-Newton
components. To generate this figure, the problem was solved twice and,
at the selected iteration, the Newton step and quasi-Newton step were
saved. Only negative quasi-Newton directions were considered in the
figure. It is possible to see in Figure~\ref{fig:alpha}(a) that very
few components are responsible for the small
step-sizes. Interestingly, most of those blocking components are
associated with components of the quasi-Newton direction which differ
considerably from the Newton direction.  Unfortunately, numerical
experiments show that the perturbation of variables or setting the
problematic components to zero has the drawback of increasing the
infeasibility and cannot be performed at every iteration.

\begin{figure}[ht]
  \centering
  
  \begin{tabular}{cc}
    \includegraphics{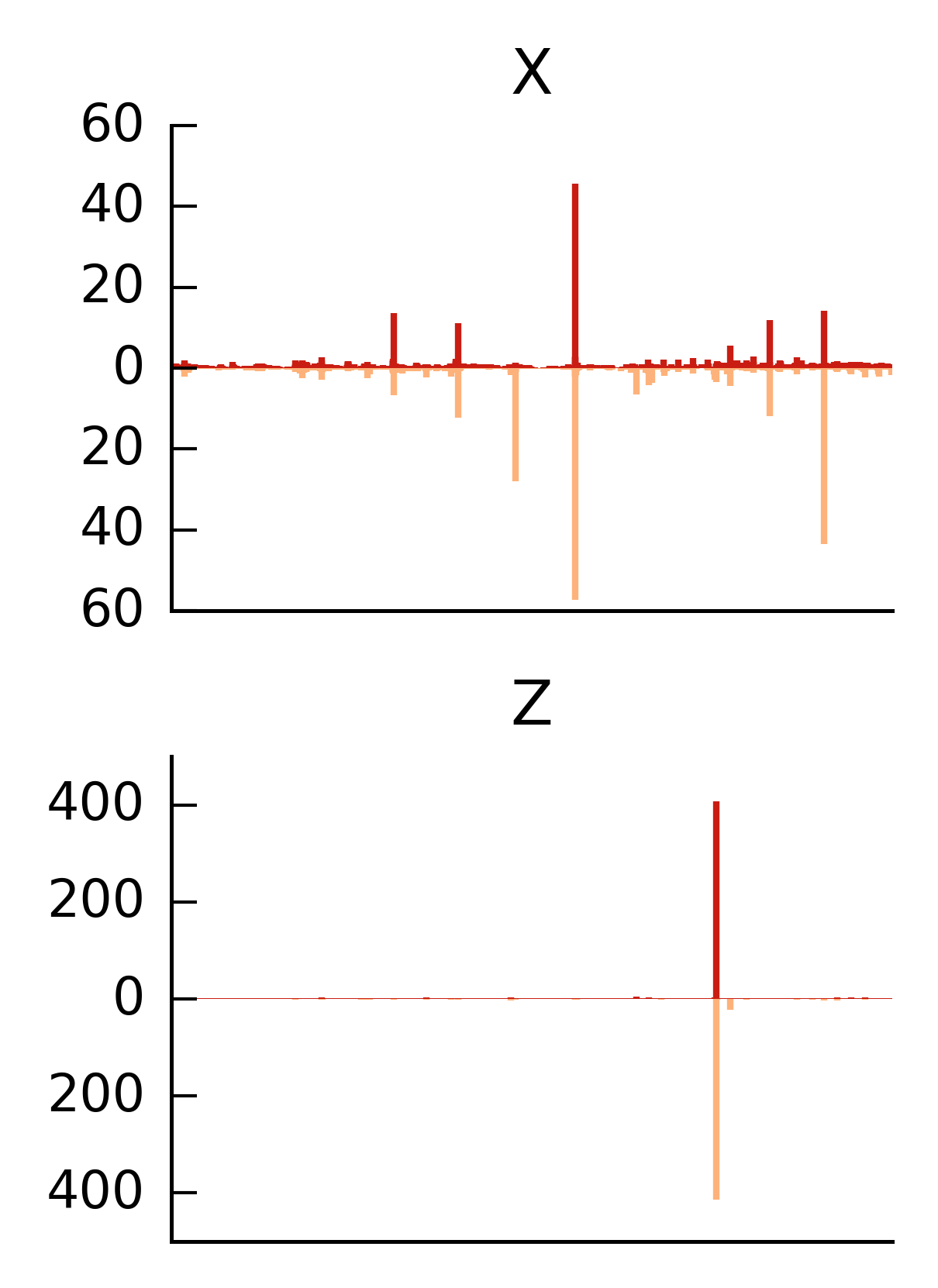} &
    \includegraphics{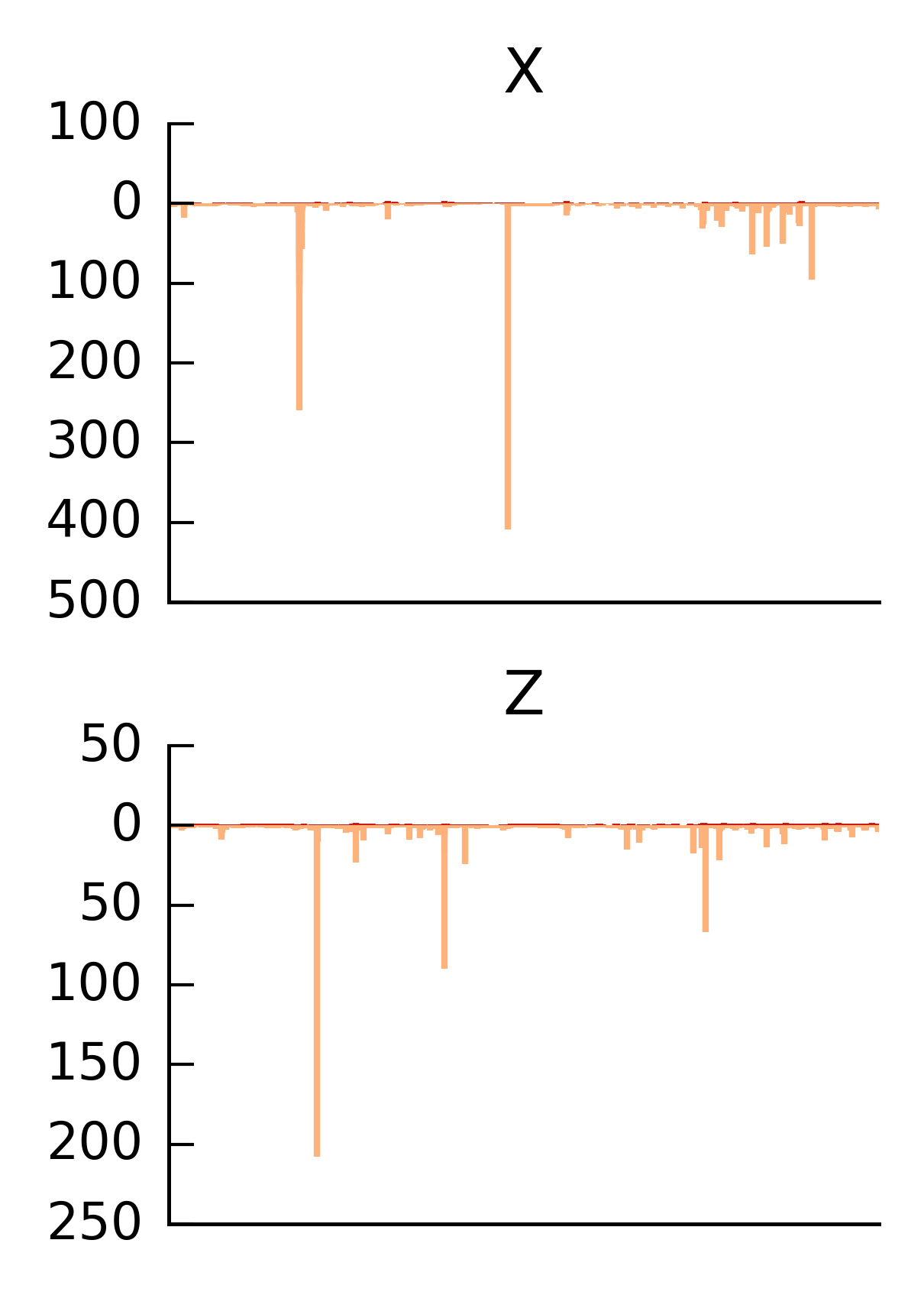} \\
    (a) & (b) \\
  \end{tabular}

  \caption{Relation between small step-sizes for quasi-Newton steps
    (positive spikes) and large relative errors when compared with
    Newton step (negative spikes) for one iteration on linear problem
    \texttt{GE}. High positive spikes represent blocking components of
    the quasi-Newton direction. The errors when only a simple
    predictor-corrector direction is used are displayed in (a). The
    effect of using strategy~\eqref{str:mc} to improve step-sizes is
    shown in (b).}
  \label{fig:alpha}
\end{figure}

To test the impact of each strategy on the quality of the steps, four
linear programming problems were selected: \texttt{afiro},
\texttt{GE}, \texttt{stocfor3} and \texttt{finnis}. The tests were
performed as follows. Given an iteration $k$ of a problem, we run
algorithm \hopdm\ allowing only Newton steps up to iteration $k - 1$.
At iteration $k$ only one of each approach is applied: Newton step,
quasi-Newton step, or one of the discussed strategies
\eqref{str:double}, \eqref{str:mc} or \eqref{str:gent}. Only one
affine-scaling predictor and one corrector were allowed, except for
strategy~\eqref{str:mc}, where multiple centrality correctors were
used at iteration $k$. We repeated this procedure for $k$ from 2 up to
the total number of iterations that the original version of \hopdm\
needed to declare convergence.

The average of the sum of the step-sizes for each problem and for each
approach is shown in Table~\ref{tab:alpha}. We can see that
quasi-Newton steps are considerably smaller than Newton steps. All
improvement strategies are able to increase, on average, the sum of
the step-sizes. Strategy~\eqref{str:double} has the drawback of
increasing the infeasibility and has a huge impact on the convergence
of the algorithm. Strategy~\eqref{str:gent} is simple and efficient to
implement but has worse results when compared to
strategy~\eqref{str:mc}, based on multiple centrality
correctors. Strategy~\eqref{str:mc} has the ability to improve
quasi-Newton directions in almost all iterations and has the drawback
of extra backsolves. Similar behavior was observed
in~\cite{Colombo2008}. The effect of strategy~\eqref{str:mc} is shown
in Figure~\ref{fig:alpha}(b). Step-sizes are increased, but the new
quasi-Newton direction is slightly different from the Newton direction
for the same step. Strategy~\eqref{str:mc} was selected as the default
one in our implementation.

\begin{table}[ht]
  \centering
  \begin{tabular}{lrrrrr}
    \toprule
     & Newton & Quasi-Newton & \eqref{str:double} & \eqref{str:mc}  & \eqref{str:gent} \\
    \midrule
    afiro    &  1.826500 &  0.849070 &  1.065883 &  $\mathbf{1.280400}$ &  0.908283 \\
    GE       &  0.911343 &  0.079197 &  0.264640 &  $\mathbf{0.620266}$ &  0.142124 \\
    stocfor3 &  1.006294 &  0.089973 &  0.569176 &  $\mathbf{1.163839}$ &  0.386584 \\
    finnis   &  1.454824 &  0.074488 &  0.452727 &  $\mathbf{1.059195}$ &  0.405455 \\
    \bottomrule
\end{tabular}

\caption{Average of the sum $\alphapk + \alphadk$ for different improvement
  strategies on selected linear programming problems. The use of multiple
  centrality correctors (strategy~\eqref{str:mc}) resulted in values similar
  to the Newton step.}
  \label{tab:alpha}
\end{table}

In order to perform as few Newton steps as possible,
step~\ref{qnipm:qn} of Algorithm~\ref{qnipm} has to be carefully
implemented. Clearly, the first basic condition to try a quasi-Newton
step at iteration $k + 1$, $k \ge 0$, is to check if there is
available memory to store it at iteration $k$.

\begin{criterion}[Memory criterion]
  \label{crit:mem}
  If $\ell \le \ell_\mathrm{max}$.
\end{criterion}

Our experience shows that quasi-Newton steps should always be tried,
since they are cheaper than Newton steps. This means that a
quasi-Newton step is always tried (but not necessarily accepted) after
a Newton step in the present implementation. As shown in
Figure~\ref{fig:l}, using only Criterion~\ref{crit:mem} can lead to
slow convergence and slow convergence is closely related to small
step-sizes. Therefore, in addition to Criterion~\ref{crit:mem} we
tested two criteria, which cannot be used together. In
Section~\ref{numerical} we compare those different acceptance
criteria.

\begin{criterion}[$\alpha$ criterion]
  \label{crit:alpha}
  If iteration $k$ is a quasi-Newton iteration and
  \[
  \alphapk + \alphadk \ge \varepsilon_\alpha.
  \]
\end{criterion}

\begin{criterion}[Centrality criterion]
  \label{crit:cent}
  If iteration $k$ is a quasi-Newton iteration and
  \[
  {x^{k + 1}}^T z^{k + 1} \le \varepsilon_c \left( {x^k}^T z^k
  \right).
  \]
\end{criterion}

\section{Numerical results}
\label{numerical}

Algorithm~\ref{qnipm} was implemented in Fortran 77 as a modification
of the primal-dual interior point algorithm \hopdm~\cite{Gondzio1995},
release 2.45. The code was compiled using \texttt{gfortran} 4.8.5 and
run in a Dell PowerEdge R830 powered with Red Hat Enterprise Linux, 4
processors Intel Xeon E7-4660 v4 2.2GHz and 512GB RAM.  The
modifications discussed in Sections~\ref{qn} and~\ref{implementation}
have been performed in order to accommodate the quasi-Newton
strategy. The main stopping criteria have been set to Mehrotra and
Li's stopping criteria~\cite{Colombo2008, Mehrotra2005}:
\begin{equation}
  \label{converged}
  \frac{\mu}{1 + |c^T x|} \le \varepsilon_\mathrm{opt},
  \quad \frac{\|b - Ax\|}{1 + \|b\|} \le \varepsilon_P, \quad
  \frac{\|c - A^T \lambda - z\|}{1 + \|c\|} \le \varepsilon_D,
\end{equation}
where $\mu = x^T z / n$. By default, in \hopdm\ parameters are defined
to $\varepsilon_\mathrm{opt} = 10^{-10}$, $\varepsilon_P = 10^{-8}$
and $\varepsilon_D$ is set to $10^{-8}$ for linear problems and to
$10^{-6}$ for quadratic problems. In addition to~\eqref{converged},
successful convergence is also declared when lack of improvement is
detected and $\mu / (1 + |c^T x|) \le 10^3 \varepsilon_\mathrm{opt}$.
Besides several performance heuristics, \hopdm\ implements the
regularization technique~\cite{Altman1999} and the multiple centrality
correctors strategy~\cite{Colombo2008}. When solving systems with the
unreduced matrix, sparse Cholesky factorization of normal equations or
$LDL^T$ factorization of the augmented system is automatically
selected on initialization. \hopdm\ also has a
matrix-free~\cite{Gondzio2012c} implementation for which the present
approach is fully compatible.

% The main goal of a quasi-Newton interior point algorithm is to
% decrease the number of expensive factorizations needed when using the
% true Jacobian at iteration $k$. When the cost of those computations is
% much higher than the cost of the backsolves used to
% solve~\eqref{qnipmit}, then the CPU time is also reduced. From
% Figure~\ref{fig:l} it is expected that the number of iterations will
% increase. We, therefore, compare the proposed approach, from now on
% called \qnipm, with the original version of \hopdm. Four test sets
% were used: 96 linear problems from Netlib, 10 linear problems from
% \url{http://old.sztaki.hu/~meszaros/public_ftp/lptestset/misc/}, X
% problems from the linear relaxation of Quadratic Assignment Problems
% (QAP) and X quadratic programming problems from xxx.

According to Algorithm~\ref{qnipm}, once a quasi-Newton step is
computed, it is used to build point $(\xysk{k + 1})$. However, in
practice, if such step is considered ``bad'', it is also possible to
discard it, setting $\ell = 0$, compute the exact Jacobian and perform
the Newton step at this iteration. The idea is to avoid quasi-Newton
steps which might degrade the quality of the current
point. Preliminary experiments using linear programming problems from
Netlib collection were performed, in order to test several
possibilities for $\ell_\mathrm{max}$ in Criterion~\ref{crit:mem} and
to select between Criteria~\ref{crit:alpha} and~\ref{crit:cent}. In
addition we also verified the possibility to reject quasi-Newton
steps, instead of always accepting them. The selected combination uses
$\ell_\mathrm{max} = 5$ and Criterion~\ref{crit:cent} with
$\varepsilon_c = 0.99$. Rejecting quasi-Newton steps has not led to
reductions in the number of factorizations and has the drawback of
more expensive iterations, therefore, the steps are always taken. As
mentioned in Section~\ref{implementation}, the multiple centrality
correctors strategy~\eqref{str:mc} is used to improve quasi-Newton
directions.

A key comparison concerns the type of low rank update to be
used. Three implementations were tested:
\begin{itemize}
\item[\texttt{U1}] General Broyden ``bad'' algorithm, described by
  Algorithm~\ref{bbalg};
\item[\texttt{U2}] Sparse Broyden ``bad'' algorithm, described by
  Algorithm~\ref{sbbalg} using update~\eqref{sbbroyd} inspired in
  Schubert's update~\cite{Schubert1970};
\item[\texttt{U3}] General Broyden ``good'' algorithm, described by
  Algorithm~\ref{gbalg}.
\end{itemize}

Four test sets were used in the comparison: 96 linear problems from
Netlib\footnote{\url{http://www.netlib.org/lp/data/}}, 10 medium-sized
linear problems from Maros-Mészáros \texttt{misc}
library\footnote{\url{http://old.sztaki.hu/~meszaros/public_ftp/lptestset/misc/}},
39 linear problems from the linear relaxation of Quadratic Assignment
Problems
(QAP)\footnote{\url{http://anjos.mgi.polymtl.ca/qaplib/inst.html}} and
138 convex quadratic programming problems from Maros-Mészáros
\texttt{qpdata}
library\footnote{\url{http://old.sztaki.hu/~meszaros/public_ftp/qpdata/}}.
In order to compare algorithms in large test sets, performance
profiles were used~\cite{Dolan2002}. A problem is declared
solved by an algorithm if the obtained solution $(\xysk{*})$
satisfies~\eqref{converged}.  Number of factorizations or total CPU
time are used as performance measures.
% In order to measure the
% quality of the achieved solution we also check if
% \begin{equation}
%   \label{solved}
% %  \|b - A \bar x\| \le 10^{-4},\quad \|c - A^T
% %  \bar y - \bar s\| \le 10^{-4},\quad
%   \frac{f_{\min} - f(x^*)}{\max\{1, f_{\min}, f(x^*) \}} \leq
%   10^{-4},
% \end{equation}
% where $f_{\min}$ is the smallest function value found among all the
% algorithms under comparison for each problem considered as solved.

Using the default \hopdm\ values for~\eqref{converged},
implementations \texttt{U1}, \texttt{U2} and \texttt{U3} are able to
solve 269, 275 and 271 problems, respectively, out of 283. There were
19 problems in which at least one implementation did not solve. We
relaxed the parameters in~\eqref{converged}, multiplying them by a
factor of $10^2$, and solved the 19 problems again. The resulting
performance profiles in numbers are shown in Table~\ref{tab:updt},
using number of factorizations and CPU time as performance
measures. The efficiency of an algorithm is the number of solved
problems in which the algorithm spent the smallest number of
factorizations (or the smallest amount of CPU time) among the compared
algorithms. The robustness is the total number of problems solved.
% The last column contains the number of problems
% (among those considered solved) for which the objective function
% value satisfies~\eqref{solved}.

We can see that update \texttt{U2} solves 210 problems using the
smallest number of factorizations and 137 problems using least CPU
time, while \texttt{U1} solves 177 and 126 and \texttt{U3} solves 123
and 85, respectively. In addition, updates \texttt{U2} and \texttt{U3}
are the most robust implementations, being able to solve 281 out of
283 problems. Therefore, \texttt{U2} was used as the default update in
this work. Update \texttt{U2} has performed particularly well on
quadratic problems, what explains the difference in efficiency between
updates.

\begin{table}[ht!]
  \centering

\begin{tabular}{lrrr}
    \toprule
    {} &  Efficiency   &  Efficiency & Robustness \\
    {} & Factorization &  CPU time   &            \\
    \midrule
    \texttt{U1} &     177 &     126 &        280 \\
    \texttt{U2} &     210 &     137 &        281 \\
    \texttt{U3} &     123 &      85 &        281 \\
    \bottomrule
  \end{tabular}
  \caption{Performance profiles for implementations \texttt{U1}, \texttt{U2} and
    \texttt{U3} on all the 283 small- and medium-sized test problems
    considered in this work.}
  \label{tab:updt}
\end{table}

% \begin{figure}[ht]
%   \centering
%   \includegraphics[width=0.8\textwidth]{pp_updt.png}
%   \caption{Performance profile comparing 3 different low rank updates.}
%   \label{fig:updt}
% \end{figure}

Based on the preliminary results, the default implementation of
Algorithm~\ref{qnipm}, denoted \qnipm\ from now on, uses update
\texttt{U2} for solving~\eqref{qnipmit} and computing the step,
strategy~\eqref{str:mc} to improve quasi-Newton directions and
Criteria~\ref{crit:mem} and~\ref{crit:cent} to decide when to use
quasi-Newton at step~\ref{qnipm:qn}. By default, \hopdm\ uses multiple
centrality correctors, which were shown to improve convergence of the
algorithm~\cite{Colombo2008}. We implemented two versions of
Algorithm~\ref{qnipm}: with (\qnipmmc) and without (\qnipm) multiple
centrality correctors for computing Newton steps. Since we are using
strategy~\eqref{str:mc}, multiple correctors are always used for
quasi-Newton steps. Each implementation was compared against its
respective original version: \hopdm-mc and \hopdm.

In the first round of tests only the QAP collection was excluded from
the comparison, which gives 244 problems from Netlib and from
Maros-Mészáros linear and quadratic programming test collection. The
performance profiles using number of factorizations and CPU time as
performance measures are shown in
Figure~\ref{fig:pphopdm}. Comparisons between the implementation of
\hopdm\ without multiple centrality correctors and \qnipm\ are given
by Figures~\ref{fig:pphopdm}(a) and~\ref{fig:pphopdm}(b). The
comparison of implementations \hopdm-mc and \qnipmmc\ is displayed in
Figures~\ref{fig:pphopdm}(c) and~\ref{fig:pphopdm}(d).

\begin{figure}[ht!]
  \centering
  
  \begin{tabular}{cc}
    \midrule
    \multicolumn{2}{c}{Without multiple centrality correctors}\\ \midrule
    Cholesky factorizations & CPU time \\
    \includegraphics{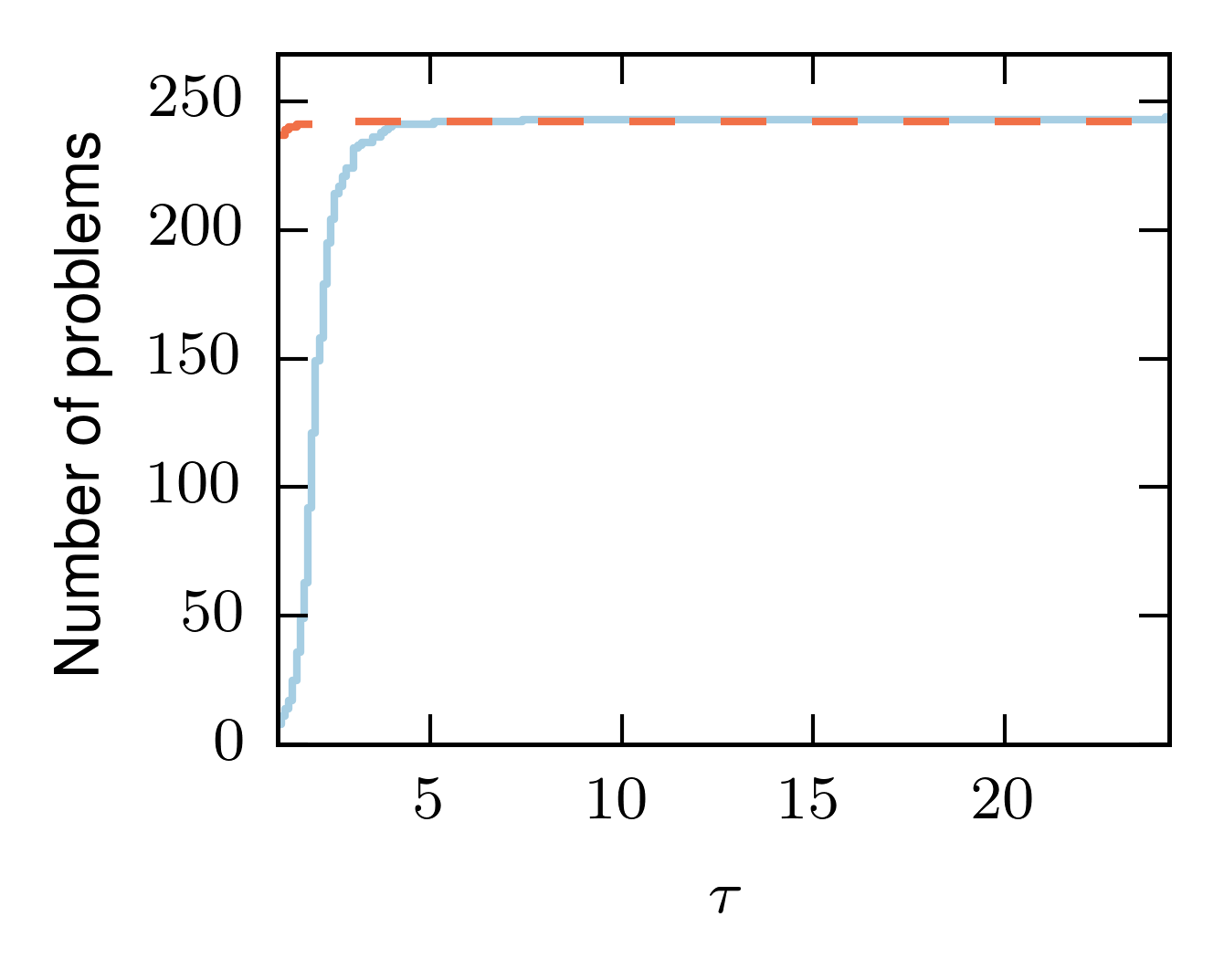} &
    \includegraphics{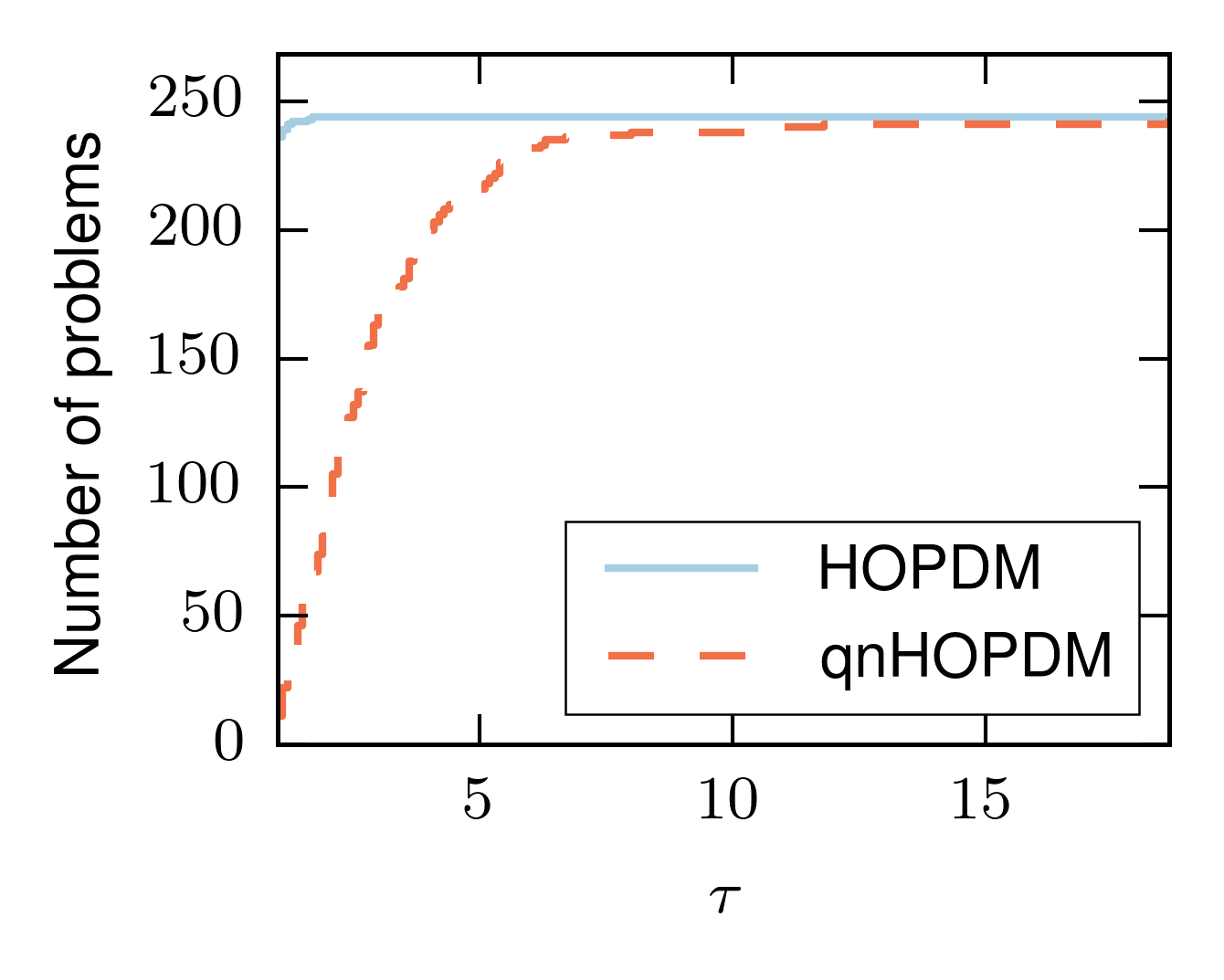} \\
    (a) &
    (b) \\ \midrule
    \multicolumn{2}{c}{With multiple centrality correctors}\\ \midrule
    Cholesky factorizations & CPU time \\
    \includegraphics{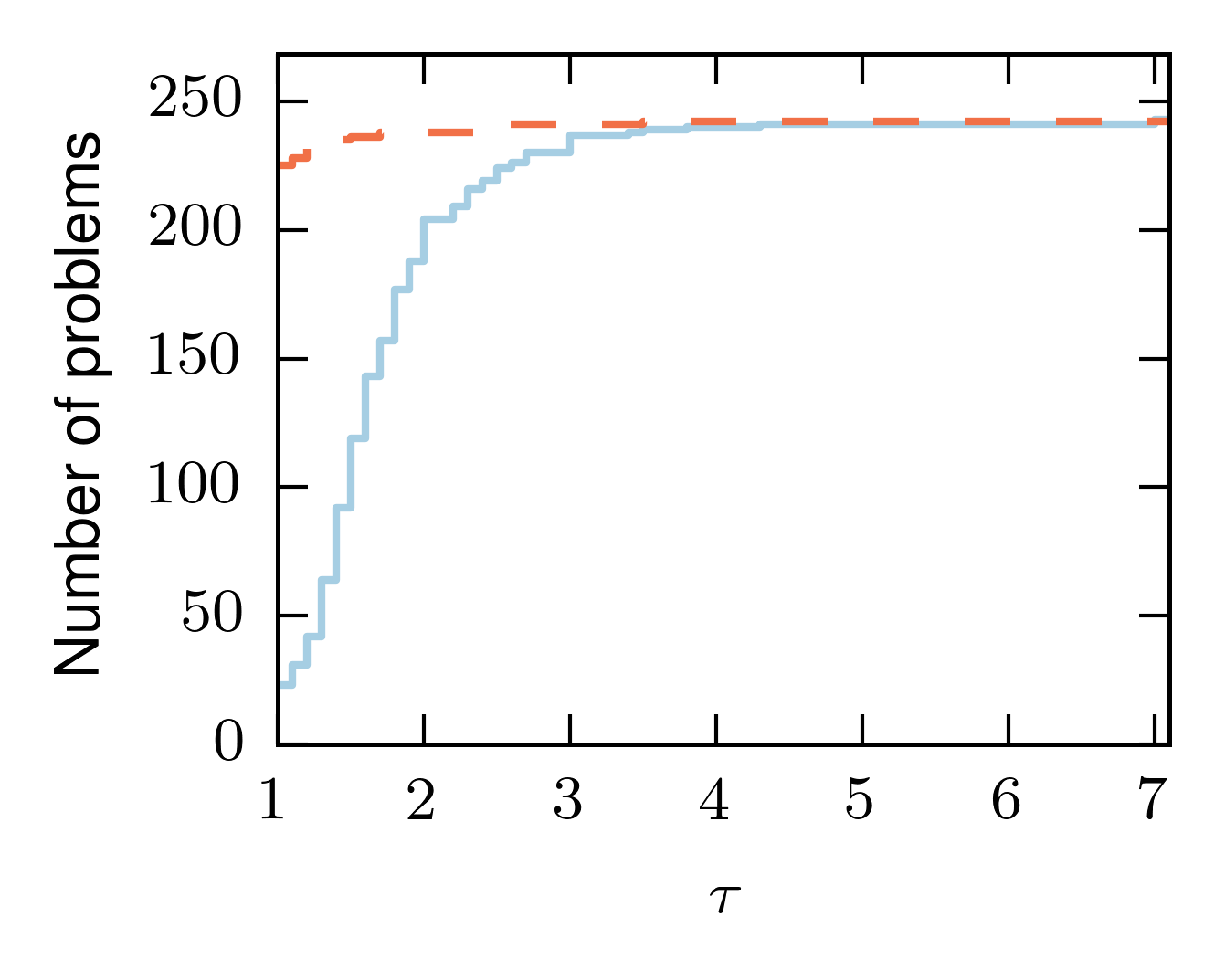} &
    \includegraphics{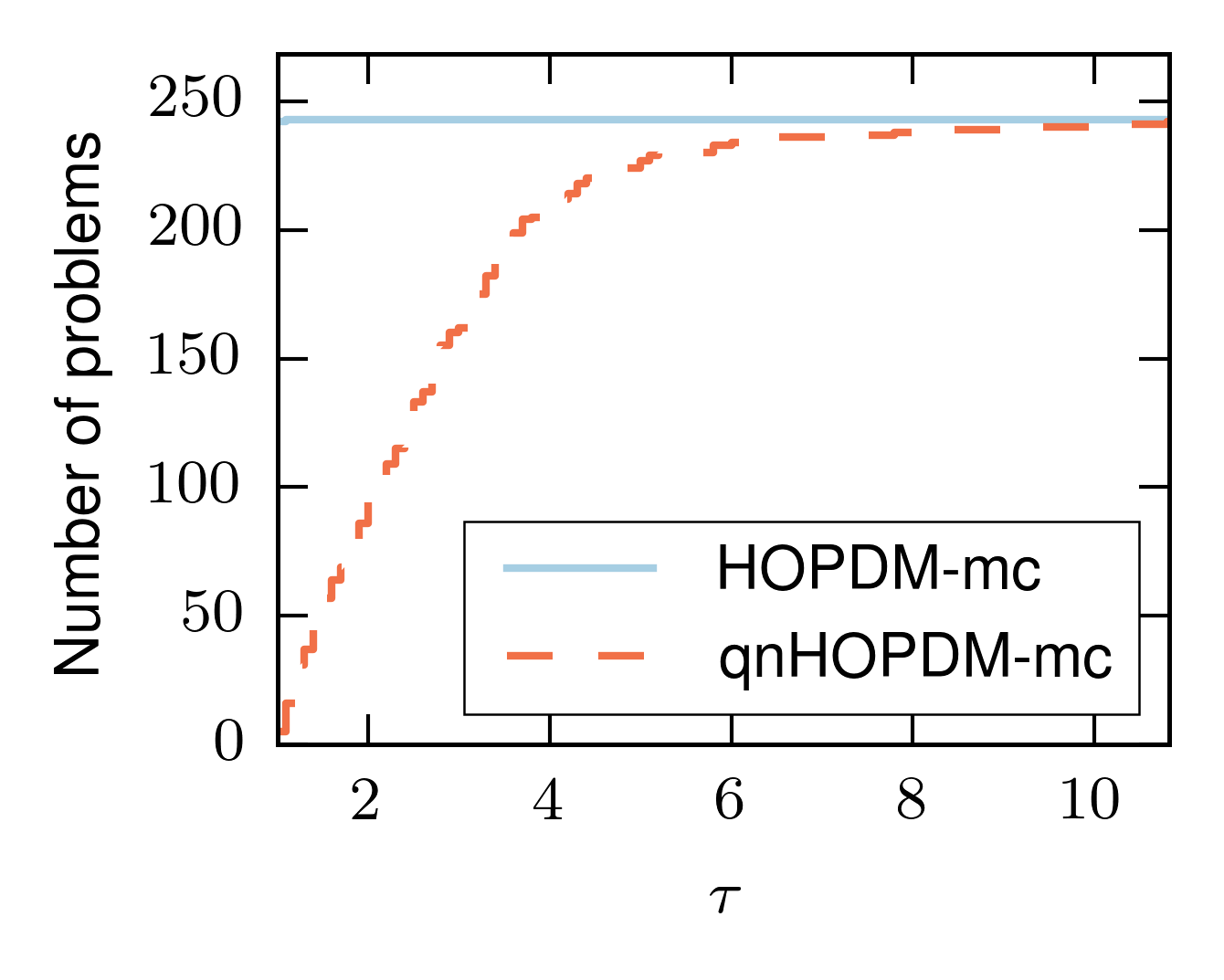} \\
    (c) &
    (d) \\
  \end{tabular}

  \caption{Performance profiles for the comparison between the
    quasi-Newton IPM and \hopdm\ without ((a) and (b)) and with ((c)
    and (d)) multiple centrality correctors for Newton steps in 244
    linear and quadratic programming problems.}
  \label{fig:pphopdm}
\end{figure}

Similarly to the previous comparison, using default parameters, 5
problems were not solved by \qnipm\ or \hopdm\ without multiple
centrality correctors, while 7 problems were not solved by \qnipmmc\
or \hopdm-mc. Criteria~\eqref{converged} was relaxed in the same way
on these problems. Using this approach, \hopdm\ is able to solve all
the 244 problems, \qnipm\ solves 242, \hopdm-mc solves 243 and
\qnipm-mc solves 242. The quasi-Newton implementations are able to
successfully reduce the number of factorizations, as shown in
Figures~\ref{fig:pphopdm}(a) and~\ref{fig:pphopdm}(c). We can see in
Figure~\ref{fig:pphopdm}(a) that from all 242 problems considered
solved by \qnipm, in 237 it uses less factorizations than \hopdm\
without multiple centrality correctors. On the other hand, for about
150 problems, \qnipm\ uses at least twice as much CPU time as \hopdm\
(Figure~\ref{fig:pphopdm}(b)). The behavior of the implementations
using multiple centrality correctors in the Newton step is similar,
but \hopdm-mc has improved efficiency results. The problems where
\qnipm\ reduces both factorizations and CPU time when compared to
\hopdm\ without centrality correctors are highlighted in
Table~\ref{tab:pphopdm}. The only problem which \qnipmmc\ uses
strictly less CPU time than \hopdm-mc is the quadratic programming
problem \texttt{cont-101}.

\begin{table}[ht!]
  \centering
\begin{tabular}{lrrrrrrrr}
\toprule
{} & \multicolumn{2}{l}{\hopdm} & \multicolumn{2}{l}{\qnipm}
   & \multicolumn{2}{l}{\hopdm-mc} & \multicolumn{2}{l}{\qnipmmc} \\
  \cmidrule(l){2-3} \cmidrule(l){4-5} \cmidrule(l){6-7} \cmidrule(l){8-9}
  {} &    \texttt{F}  & \texttt{CPUt} &    \texttt{F}  & \texttt{CPUt} &    \texttt{F}  & \texttt{CPUt}
                                      &    \texttt{F}  & \texttt{CPUt} \\
\midrule
\texttt{dfl001  } &    53 &  56.975 &      24 &  $\mathbf{36.887}$ &       24 &  28.178 &         21 &  36.122 \\
\texttt{maros-r7} &    16 &   2.362 &       8 &  $\mathbf{ 2.080}$ &       10 &   1.723 &          8 &   2.361 \\
\texttt{pilot87 } &    31 &   4.242 &      10 &  $\mathbf{ 3.277}$ &       15 &   2.472 &         11 &   3.576 \\
\texttt{cont-101} &    11 &   1.138 &       5 &  $\mathbf{ 1.090}$ &        9 &   1.255 &          5 &   $\mathbf{1.160}$ \\
\texttt{cont-200} &     9 &   6.992 &       5 &  $\mathbf{ 6.050}$ &        9 &   8.031 &         12 &  15.666 \\
\texttt{dualc8}   &   121 &   0.049 &       5 &  $\mathbf{ 0.029}$ &       61 &   0.036 &         23 &   0.056 \\
\texttt{hs35    } &     8 &   0.025 &       3 &  $\mathbf{ 0.023}$ &        7 &   0.022 &          3 &   0.023 \\
\texttt{tame    } &     5 &   0.021 &       2 &  $\mathbf{ 0.020}$ &        5 &   0.020 &          2 &   0.021 \\
\bottomrule
\end{tabular}

\caption{Problems where the quasi-Newton implementation \qnipm\ used
  strictly less CPU time than \hopdm.}
\label{tab:pphopdm}
\end{table}

Our last comparison considers 39 medium-sized problems from the QAP
collection. These problems are challenging, since they are sparse, but
their Cholesky factorization is very dense. Performance profiles were
once more used for comparing the implementations. As the algorithm
approaches the solution, the linear systems become harder to
solve. Therefore, using default \hopdm\ values for parameters
in~\eqref{converged} the number of problems solved is 21 (\hopdm), 31
(\qnipm), 25 (\hopdm-mc) and 35 (\qnipmmc). Clearly the quasi-Newton
approach benefits of using matrices that are not too close to the
solution. From the 39 problems, 19 were solved again using relaxed
parameters for the comparison between \hopdm\ and \qnipm, and 14 were
solved again for the comparison between \hopdm-mc and \qnipmmc. The
results are shown in Figure~\ref{fig:qap}. Quasi-Newton IPM is the
most efficient and robust algorithm in terms of CPU time for both
implementations, solving all 39 problems. Without multiple centrality
correctors (Figure~\ref{fig:qap}(a)), \hopdm\ has a poor performance
and is not able to solve any problem using less CPU time than
\qnipm. When multiple centrality correctors are allowed
(Figure~\ref{fig:qap}(b)), \hopdm-mc is able to solve only 10 problems
using less or equal CPU time than \qnipmmc.

Clearly, the efficiency of \qnipm\ is due to the decrease in the
number of factorizations, as shown in Table~\ref{tab:qap}. In this
table we display the number of factorizations (\texttt{F}) and CPU
time (\texttt{CPUt}) for each problem and each algorithm in all QAP
test problems considered. When no multiple centrality correctors are
allowed at Newton steps, \qnipm\ displays the biggest improvements,
being the fastest solver in all problems. The results are more
competitive when multiple centrality correctors are allowed, but
\qnipmmc\ was the most efficient in 29 problems while \hopdm-mc was the
most efficient in 10 problems.

\begin{figure}[ht!]
  \centering
  
  \begin{tabular}{cc}
    \includegraphics{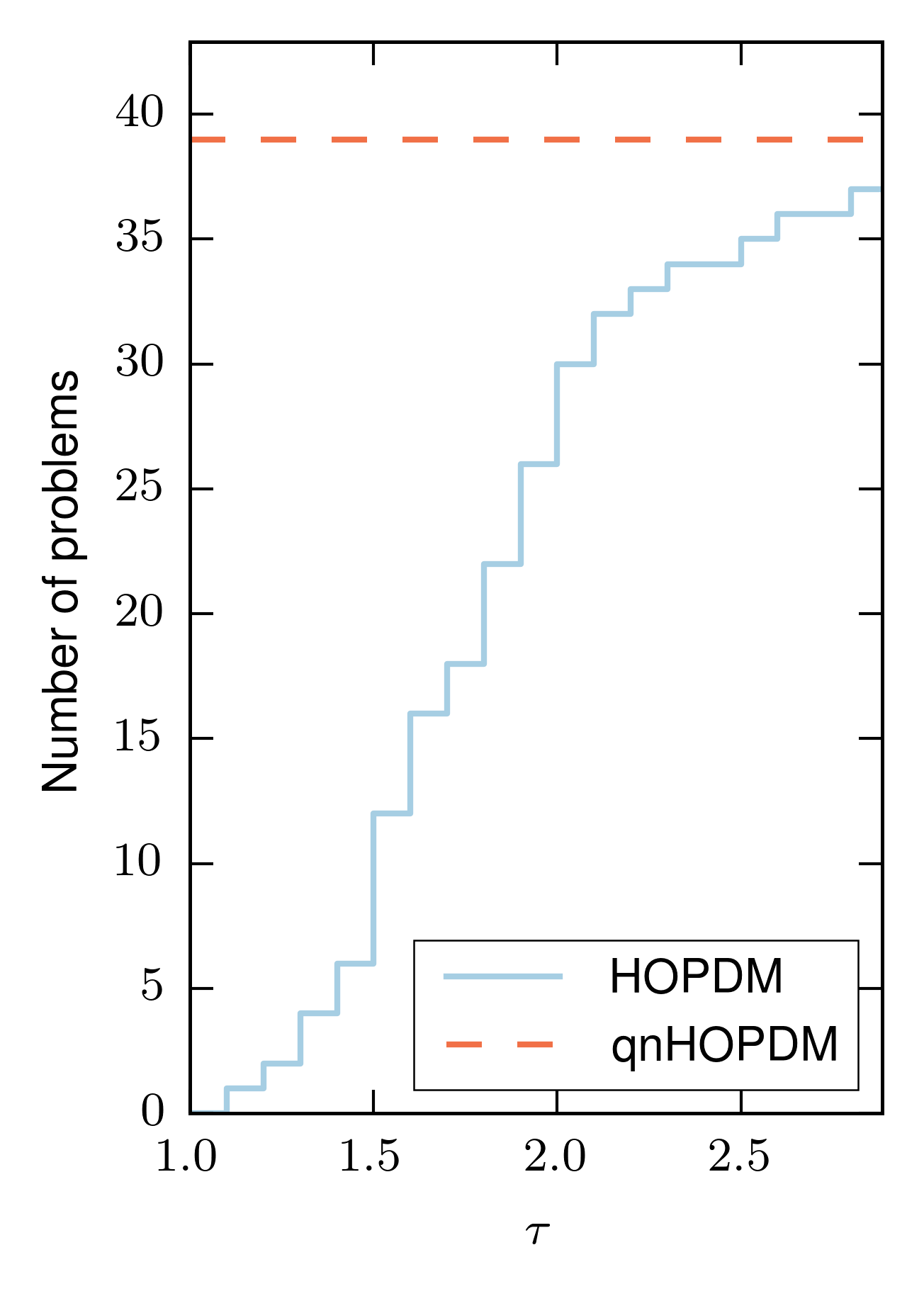} &
    \includegraphics{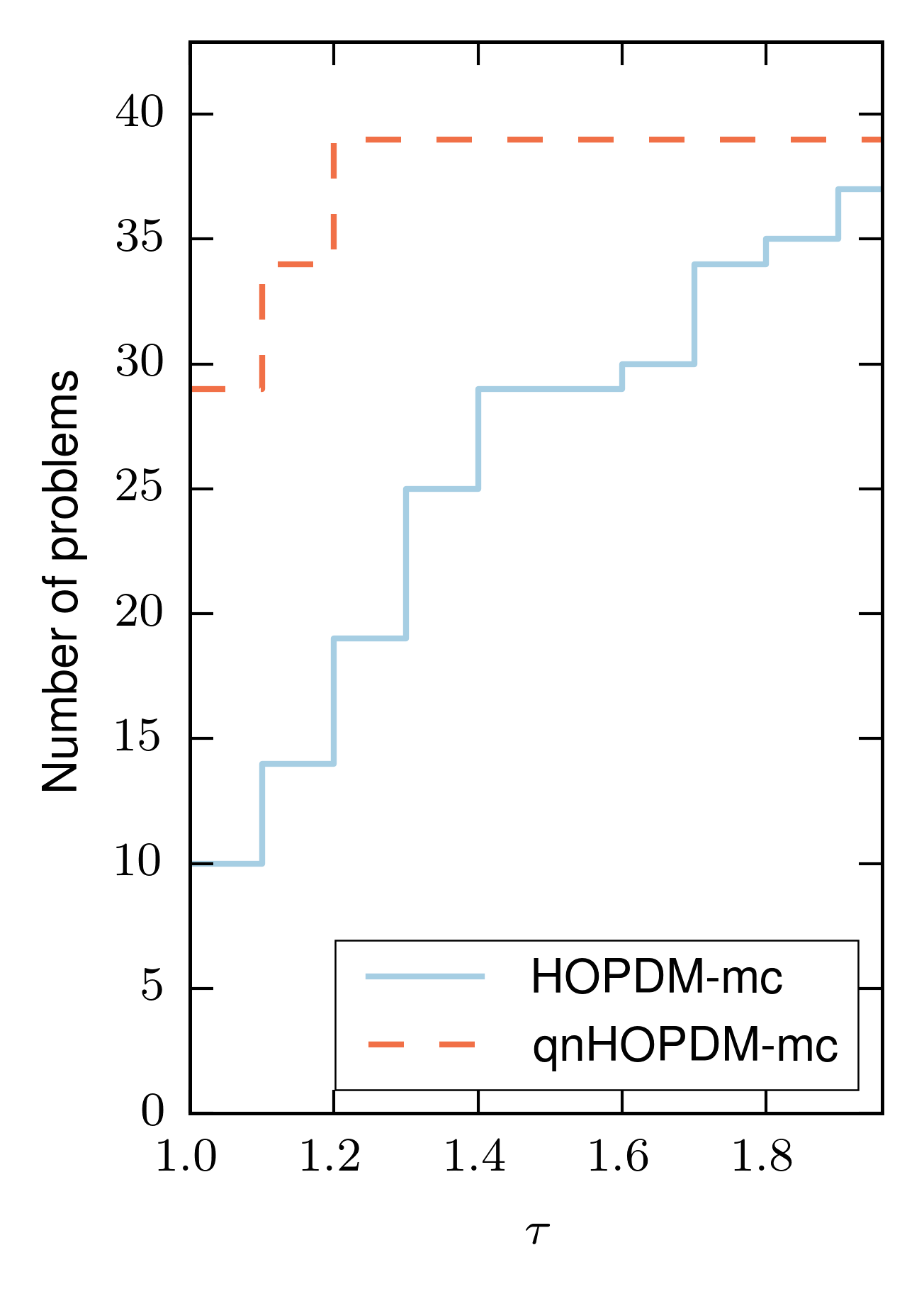} \\
    (a) &
    (b)
  \end{tabular}

  \caption{Performance profiles for the comparison between
    quasi-Newton IPM and \hopdm\ on the QAP test collection. The CPU
    time was used as performance measure.}
  \label{fig:qap}
\end{figure}

\begin{table}[ht]
  \centering
\begin{tabular}{lrrrrrrrr}
\toprule
{} & \multicolumn{2}{l}{\hopdm} & \multicolumn{2}{l}{\qnipm} & \multicolumn{2}{l}{\hopdm-mc} & \multicolumn{2}{l}{\qnipm-mc} \\
  \cmidrule(l){2-3} \cmidrule(l){4-5} \cmidrule(l){6-7} \cmidrule(l){8-9}
  {} &    \texttt{F}  & \texttt{CPUt} &    \texttt{F}  & \texttt{CPUt} &    \texttt{F}  & \texttt{CPUt}
                                      &    \texttt{F}  & \texttt{CPUt} \\
\midrule

\texttt{qap8  } &    12 &     0.657\blanksp\blanksp &       5 &    0.438\blanksp\blanksp &        9 &     0.481\blanksp\blanksp &          4 &    0.396\blanksp\blanksp \\
\texttt{qap12 } &    20 &    34.225\blanksp\blanksp &      12 &   23.052\blanksp\blanksp &       14 &    23.928\blanksp\blanksp &          9 &   19.120\blanksp\blanksp \\
\texttt{qap15 } &    15 &   199.306\notfeas\blanksp &       9 &  149.782\blanksp\blanksp &       13 &   175.021\blanksp\blanksp &         12 &  179.777\blanksp\blanksp \\
\texttt{chr12a} &    15 &    25.858\blanksp\blanksp &       6 &   14.887\blanksp\blanksp &       10 &    17.654\blanksp\blanksp &          6 &   13.563\blanksp\blanksp \\
\texttt{chr12b} &    14 &    24.127\blanksp\blanksp &       6 &   13.110\blanksp\blanksp &        9 &    16.108\blanksp\blanksp &          5 &   11.558\blanksp\blanksp \\
\texttt{chr12c} &    14 &    24.019\blanksp\blanksp &       6 &   14.297\blanksp\blanksp &       10 &    17.711\blanksp\blanksp &          6 &   14.262\blanksp\blanksp \\
\texttt{chr15a} &    28 &   365.526\notfeas\blanksp &      11 &  168.167\blanksp\blanksp &       11 &   220.737\notfeas\blanksp &         11 &  175.070\blanksp\blanksp \\
\texttt{chr15b} &    18 &   254.418\blanksp\blanksp &       9 &  138.639\blanksp\blanksp &       11 &   149.842\blanksp\blanksp &          8 &  138.697\blanksp\blanksp \\
\texttt{chr15c} &    15 &   198.142\blanksp\blanksp &       7 &  110.466\blanksp\blanksp &       10 &   137.979\blanksp\blanksp &          6 &  102.391\blanksp\blanksp \\
\texttt{chr18a} &    31 &  2267.138\notfeas\blanksp &      10 &  814.251\notfeas\blanksp &       13 &  1007.215\notfeas\blanksp &         10 &  833.171\notfeas\blanksp \\
\texttt{chr18b} &    15 &  1094.975\blanksp\blanksp &       5 &  430.104\blanksp\blanksp &       11 &   812.243\blanksp\blanksp &          5 &  436.455\blanksp\blanksp \\
\texttt{esc16a} &     9 &   233.146\blanksp\blanksp &       4 &  120.412\blanksp\blanksp &        9 &   229.835\blanksp\blanksp &          5 &  148.167\blanksp\blanksp \\
\texttt{esc16b} &     6 &   161.393\blanksp\blanksp &       3 &  128.426\notfeas\blanksp &        7 &   184.806\blanksp\blanksp &          5 &  152.563\blanksp\blanksp \\
\texttt{esc16c} &     9 &   256.499\blanksp\blanksp &       4 &  151.036\blanksp\blanksp &        6 &   168.644\notfeas\blanksp &          3 &  100.523\blanksp\blanksp \\
\texttt{esc16d} &    10 &   236.599\blanksp\blanksp &       4 &  132.912\blanksp\blanksp &        6 &   165.879\notfeas\blanksp &          4 &  126.286\blanksp\blanksp \\
\texttt{esc16e} &     9 &   228.907\blanksp\blanksp &       5 &  154.823\blanksp\blanksp &        8 &   206.985\blanksp\blanksp &          4 &  126.947\blanksp\blanksp \\
\texttt{esc16f} &     5 &   137.600\notfeas\notopti &       2 &   74.728\blanksp\blanksp &        5 &   202.329\notfeas\notopti &          2 &   78.376\blanksp\blanksp \\
\texttt{esc16g} &     7 &   184.014\notfeas\blanksp &       4 &  118.925\notfeas\blanksp &        6 &   161.090\notfeas\blanksp &          4 &  135.363\blanksp\blanksp \\
\texttt{esc16h} &     7 &   187.607\notfeas\blanksp &       4 &  124.728\blanksp\blanksp &        9 &   229.396\blanksp\blanksp &          4 &  129.765\blanksp\blanksp \\
\texttt{esc16i} &     9 &   229.359\notfeas\blanksp &       5 &  147.298\blanksp\blanksp &        8 &   210.252\notfeas\blanksp &          4 &  127.249\notfeas\blanksp \\
\texttt{esc16j} &     9 &   233.170\notfeas\notopti &       4 &  125.714\blanksp\blanksp &        8 &   190.339\notfeas\notopti &          4 &  124.838\blanksp\blanksp \\
\texttt{had12 } &    15 &    25.463\notfeas\blanksp &      13 &   23.704\blanksp\blanksp &        8 &    14.852\notfeas\blanksp &          6 &   17.055\blanksp\blanksp \\
\texttt{had14 } &    16 &   132.848\notfeas\blanksp &       6 &   53.987\blanksp\blanksp &        8 &    63.408\notfeas\blanksp &          8 &   75.801\blanksp\blanksp \\
\texttt{had16 } &    16 &   407.539\notfeas\blanksp &      13 &  370.233\notfeas\blanksp &        8 &   212.278\notfeas\blanksp &          6 &  185.092\notfeas\blanksp \\
\texttt{had18 } &    17 &  1221.709\notfeas\blanksp &      11 &  831.704\notfeas\blanksp &        8 &   636.914\notfeas\blanksp &          8 &  655.777\notfeas\blanksp \\
\texttt{nug12 } &    20 &    33.161\blanksp\blanksp &      12 &   23.069\blanksp\blanksp &       14 &    23.910\blanksp\blanksp &          9 &   21.327\blanksp\blanksp \\
\texttt{nug14 } &    17 &   129.937\notfeas\blanksp &       8 &   66.117\blanksp\blanksp &       14 &    96.694\blanksp\blanksp &         11 &   95.963\blanksp\blanksp \\
\texttt{nug15 } &    15 &   198.589\notfeas\blanksp &       9 &  141.071\blanksp\blanksp &       13 &   175.054\blanksp\blanksp &         12 &  190.730\blanksp\blanksp \\
\texttt{nug16a} &    17 &   417.437\notfeas\blanksp &      11 &  314.709\notfeas\blanksp &       16 &   391.903\blanksp\blanksp &         13 &  361.863\blanksp\blanksp \\
\texttt{nug16b} &    15 &   413.183\notfeas\blanksp &       7 &  204.477\blanksp\blanksp &       14 &   347.994\blanksp\blanksp &         11 &  301.793\blanksp\blanksp \\
\texttt{nug17 } &    17 &   732.045\notfeas\blanksp &       8 &  406.272\notfeas\blanksp &        8 &   391.035\notfeas\blanksp &          9 &  443.721\blanksp\blanksp \\
\texttt{nug18 } &    16 &  1161.936\notfeas\blanksp &       7 &  602.210\notfeas\blanksp &        9 &   921.522\notfeas\blanksp &          6 &  508.669\blanksp\blanksp \\
\texttt{rou12 } &    23 &    37.859\blanksp\blanksp &      13 &   24.526\blanksp\blanksp &       13 &    22.755\blanksp\blanksp &         10 &   23.001\blanksp\blanksp \\
\texttt{rou15 } &    23 &   296.984\blanksp\blanksp &       9 &  132.725\blanksp\blanksp &       12 &   162.789\blanksp\blanksp &          9 &  148.203\blanksp\blanksp \\
\texttt{scr12 } &    28 &    45.440\blanksp\blanksp &      11 &   21.778\blanksp\blanksp &       13 &    22.485\blanksp\blanksp &         11 &   23.858\blanksp\blanksp \\
\texttt{scr15 } &    27 &   368.647\blanksp\blanksp &      13 &  187.875\blanksp\blanksp &       16 &   212.057\blanksp\blanksp &         15 &  235.463\blanksp\blanksp \\
\texttt{tai12a} &    24 &    39.167\blanksp\blanksp &      10 &   20.823\blanksp\blanksp &       14 &    24.274\blanksp\blanksp &          8 &   20.890\blanksp\blanksp \\
\texttt{tai15a} &    24 &   324.739\blanksp\blanksp &      12 &  183.891\blanksp\blanksp &       11 &   156.699\blanksp\blanksp &         12 &  187.767\blanksp\blanksp \\
\texttt{tai17a} &    24 &  1015.653\blanksp\blanksp &      15 &  836.886\blanksp\blanksp &       12 &   528.553\blanksp\blanksp &          6 &  314.275\blanksp\blanksp \\

\bottomrule
\end{tabular}

\caption{Numerical results for the QAP collection. For each algorithm the
  number of Cholesky factorizations (\texttt{F}) and CPU time (\texttt{CPUt}) is displayed. Index
  \notfeas\ represents solutions considered not solved using default parameters while \notopti\ 
  marks solutions considered not solved using relaxed parameters.
}
\label{tab:qap}
\end{table}

% \subsection{Behavior on large instances}

\section{Conclusions}
\label{conclusions}

In this work we discussed a new approach to IPM based on rank-one
secant updates for solving quadratic programming problems. The
approach was motivated by the multiple centrality correctors, which
provide many possible points where the function $F$ can be evaluated
in order to build a good approximation of $J$. Instead of using
several points, the present approach uses only the new computed point
in order to build a low rank approximation to the unreduced matrix at
the next iteration. The computational cost of solving the quasi-Newton
linear system can be compared with the cost of computing one
corrector, as all the factorizations and preconditioners have already
been calculated.

It was shown that rank-one secant updates maintain the main structure
of the unreduced matrix. Also, several aspects of an efficient
implementation were discussed. The proposed algorithm was implemented
as a modification of algorithm \hopdm\ using the Broyden ``bad''
update, modified to preserve the sparsity structure of the unreduced
matrix. The implementation was compared with the original version of
\hopdm\ and was able to reduce the overall number of factorizations in
most of the problems. However, only in the test set containing linear
relaxations of quadratic assignment problems, the reduction in the
number of factorizations was systematically translated into the
reduction of the CPU time of the algorithm. This suggests that the
proposed algorithm is suitable for problems where the computational
cost of the factorizations is much higher than the cost of the
backsolves.

\bibliographystyle{gs}
\bibliography{gs}

\end{document}